\numberwithin{equation}{section}
\newcommand{\grad}{\nabla }
\numberwithin{figure}{section}
\newtheorem{theorem}{Theorem}[section]
\newtheorem{remark}[theorem]{Remark}
\begin{document}
\title[Existence for the financial stochastic Stefan problem with spread] {Existence of maximal solutions for the financial
stochastic Stefan problem of a volatile asset with spread}

%%%%%%%%%%%%%%%%%%%%%%%%%%%%%%%%%%%%%%%%%%%%%%%%%%
\author[D.~C. Antonopoulou]{D.~C. Antonopoulou$^{\# *}$}
\author[D. Farazakis]{D. Farazakis$^{*}$}
\author[G. Karali]{G. Karali$^{\dag *}$}

\thanks
{$^{\dag}$ Department of Mathematics and Applied Mathematics,
University of Crete, GR--714 09 Heraklion, Greece.}

\thanks
{$^{\#}$ Department of Physical and Mathematical Sciences,
Engineering, University of Chester, CH2 4NU, UK}
\thanks
{$^{*}$ Institute of Applied and Computational Mathematics,
FORTH, GR--711 10 Heraklion, Greece.}

%
%
%

%\begin{AMS}
\subjclass{}
%\end{AMS}
%\date{Received: date / Revised: date}
%\date{today}
%
%

\begin{abstract}
In this work, we consider
 the outer Stefan problem for the short-time prediction of the spread of
a volatile asset traded in a financial market. The stochastic
equation for the evolution of the density of sell and buy orders
is the Heat Equation with a non-smooth noise in the sense of
Walsh, posed in a moving boundary domain with velocity given by
the Stefan condition. This condition determines the dynamics of
the spread, and the solid phase $[s^-(t),s^+(t)]$ defines the
bid-ask spread area wherein the transactions vanish.
 We introduce a reflection measure and prove existence and uniqueness of maximal
solutions up to stopping times in which the spread
$s^+(t)-s^-(t)$ stays a.s. non-negative and bounded. For this, we
use a Picard approximation scheme and some of the estimates of
\cite{BH} for the Green's function and the associated to the
reflection measure obstacle problem. Analogous results are
obtained for the equation without reflection corresponding to a
signed density. Additionally, we apply some formal asymptotics
when the noise depends only on time to derive that the spread is
given by the integral of the solution of a linear diffusion
stochastic equation.

\end{abstract}
\maketitle \pagestyle{myheadings}
\thispagestyle{plain}

{\small \textbf{Keywords:} Phase field models, Stefan problem,
stochastic volatility, limit order books, spreads.

\textbf{AMS subject classification:} 35K55, 35K40, 60H30, 60H15,
91G80, 91B70.}
\thispagestyle{plain}
%`
%
%
%%%%%%%%%%%%%%%%%%%%%%%%%%%%%%%%%%%%%%%%%%%%%%%%%%%%%%%%%%%%%%%%%%%%
% SECTION 1

\section{Introduction}

\subsection{The Stochastic Stefan problem with spread} Let $w(x,t)$ be the
density of sell and buy orders of a stochastically volatile
liquid asset with spread. The moving boundary of the outer Stefan
problem for $w$, $t\geq 0$, is the union of the curves
$x=s^+(t),\;\; x=s^-(t),$ enclosing the solid phase (or spread
area) $\overline{S(t)}$ defined at a given time $t$ by the
interval $\overline{S(t)}:=[s^-(t),s^+(t)].$ The midpoint
$s(t):=(s^-(t)+s^+(t))/2$ is the so-called mid price, and the
length $s^+(t)-s^{-}(t)$ of $\overline{S(t)}$ is the spread at
time $t$. The asset price $x$ has been transformed through a
logarithmic scale and in general can take negative and positive
values. If $x$ is set in $\overline{S(t)}$, then the asset is not
traded, and thus the density $w(x,t)$ of sell and buy orders is
zero, otherwise the order is performed.

The Stefan problem for $w=w(x,t)$ satisfying the stochastic Heat
equation is written as follows
\begin{equation}\label{stef1}\left\{
\begin{aligned}
&\partial_t w=\alpha\Delta w
+\sigma({\rm dist}(x,\partial S))\dot{W}_s,\;\;\;\;x\in\mathbb{R}-\overline{S(t)}\;\;\mbox{(`liquid' phase)},\;\;t>0,\\
&w=0,\;\;x\in
\overline{S(t)}\;\;\;\;\;\;\;\;\;\;\;\;\;\;\;\;\;\;\;\;\;\;\;\;\mbox{(`solid' phase )},\\
&V:=-\nabla w|_{\partial S}\;\;\;\;\;\;\;\;\;\;\;\;\;\;\;\;\;\;\;\;\;\;\;\;\;\;\mbox{(Stefan condition)},\\
&\partial S(0)=\{s^-(0),s^+(0)\}={\rm given}.
\end{aligned}
\right.
\end{equation}
Here, $\alpha> 0$ stands for the total liquidity index of the
market, estimated by the limit order book of the asset, and
$\sigma\dot{W}_s$ is the stochastic volatility. The noise
diffusion $\sigma$ is a function of
\begin{equation}\label{dis}
{\rm dist}(x,\partial S(t))=\min\{|x-s^+(t)|,|x-s^-(t)|\}
\end{equation}
the distance of the price $x$ from the solid phase boundary
$\partial S=\partial S(t)=\{s^-(t),s^+(t)\}$, and
\begin{equation}\label{noi1}
\dot{W}_s(x,t):=\dot{W}(x-s^+(t),t) \;\; \text{if}\;\;x\geq
s^+(t), \;\;
  \dot{W}_s(x,t):=\dot{W}(-x+s^-(t),t) \;\; \text{if}\;\;x\leq s^-(t),
  \end{equation}
where $\dot{W}(\pm x\mp s^\pm(t),t)$ is the non smooth in space
and in time noise defined by Walsh in \cite{Walsh}. The initial
condition $w(x,0)$ is considered given for all $x\in\mathbb{R}$.

The limit orders are instructions for trading of a portion of an
asset, \cite{RobertMcDonh786ald}, based on information from the
limit order book. The lowest sell order $s^+(t)$ defined as ask
price, is the minimum price at which the investor is willing to
receive, and $s^-(t)$ is the highest buy order or bid price which
is the maximum price at which the investor is willing to pay. An
order is executed if the price set (the so-called spot price)
lies outside the spread interval $[s^-(t),s^+(t)]$, if not it is
sorted in the order book list and not traded, see for example in
\cite{GP,anrewload8f,ps}. We also note that the Gibbs Thomson
condition on the moving boundary $\partial S(t)$ which is present
in dimensions $d\geq 2$, \cite{niet1}, involving the mean
curvature, and the constant value of $w=w_0$ in the solid phase
are both replaced by the condition $w=w_0:=0$ in
$\overline{S(t)}$.

The velocity $V$ of
 $\partial S(t)$ is defined at the boundary points by the
 Stefan condition
 \begin{equation}\label{stefcondgen}
 \begin{split}
V(s^+(t),t):=&\partial_t s^{+}(t)=-(\nabla w)^+(s^+(t),t),\\
V(s^-(t),t):=&\partial_t s^{-}(t)=-(\nabla w)^-(s^-(t),t),
\end{split}
\end{equation}
for  $(\nabla \cdot )^\pm$ denoting the derivative from the right
($x> s^+$) and left ($x<s^-$); the Stefan condition describes the
change of liquidity. Therefore, the spread dynamics are given by
\begin{equation}\label{spdyng}
\partial_t s^+(t)-\partial_t s^-(t)=-(\nabla
w)^+(s^+(t),t)+(\nabla w)^-(s^-(t),t)).
\end{equation}
The gradients are taken along the `outer' normal vector, i.e., the
direction is towards the liquid phase, so here in $d=1$ they
coincide to the right and left derivatives. Models with a.s.
non-negative density $w$, when for example a reflection measure is
introduced to the stochastic heat equation, due to the fact that
$w=0$ at $x=s^{\pm}$ will result in an a.s. decreasing spread.
More specifically $(\nabla w)^+(s^+(t),t)\geq 0$ and $(\nabla
w)^-(s^-(t),t)\leq 0$ and thus by \eqref{spdyng} $\partial_t(
s^+(t)- s^-(t))\leq 0$ for all $t\geq 0$ a.s. In contrast, when a
signed density is considered the spread is not monotone.

Motivated by the analysis of \cite{niet1,niet2,AKY} in higher
dimensions, we define the bounded and time independent space
domain $\Omega=(a,b)$ by
\begin{equation}\label{ome}
\Omega=\Omega_{{\rm Liq}}(t)\cup [s^-(t),s^+(t)],
\end{equation}
for a liquid phase $\Omega_{{\rm Liq}}\subset\Omega$ so that for
any $x\in\Omega_{{\rm Liq}}$
\begin{equation}\label{lam}
0\leq |x-s^-|,|x-s^+|\leq\lambda,
\end{equation}
 for $\lambda=b-a$ a positive
constant relatively very larger than the initial spread
$s^+(0)-s^-(0)$. The density $w(x,t)$ will be observed for $x$ in
$\Omega$. As $\lambda\rightarrow\infty$ the liquid phase becomes
infinite as in \eqref{stef1} and $\Omega$ will correspond to
$\mathbb{R}$. The problem is one-dimensional and the liquid phase
consists of two separate bounded linear segments. This enables the
spliting of the Stefan problem equation in two equations posed
for $x\in\Omega_{\rm Liq}$ on $x\geq s^+$ and on $x\leq s^-$ where
we shall apply the change of variables
\begin{equation}\label{cv1gen}
\begin{split}
y= x-s^+(t) \;\; \text{if }x\geq s^+(t), \;\;
    y=-x+s^-(t) \;\; \text{if }x\leq s^-(t),
  \end{split}
\end{equation}
and thus
\begin{equation}\label{cv2gen}
\begin{split}
y_t=  -\partial_t s^+(t) \;\; \text{if }x\geq s^+(t), \;\;
    \partial_ts^-(t) \;\; \text{if }x\leq s^-(t).
  \end{split}
\end{equation}
As we shall see the equation is transformed due to the Stefan
condition into two independent ones posed each on the fixed space
domain $\mathcal{D}:=(0,\lambda)$ with Dirichlet b.c. The value
$y=0$ occurs when the price $x$ is $s^{\pm}$, while $y=\lambda$
when the spread is zero and $s^+=s^-$ hits the boundary of
$\Omega$. These equations are of the general form
\begin{equation}\label{meq11}
v_t(y,t)=\alpha \Delta v(y,t)\mp\nabla v(0^+,t)\nabla
v(y,t)\pm\sigma(y)\dot{W}(y,t)+\dot{\eta}(y,t),\;\;y\in\mathcal{D},\;\;t\geq
0,
\end{equation}
 for $\eta$ a
reflection measure keeping $v$ a.s. non-negative, while $\eta=0$
will correspond to the unreflected problem and a signed $v$. We
also note that when a system is considered in place of the Stefan
problem \eqref{stef1} with buy and sell densities observed
separately and with different liquidity coefficients $\alpha_1$,
$\alpha_2$, the same equation of the above general form will
appear after the change of variables for
$\alpha=\alpha_1,\alpha_2$.

We prove existence of unique maximal solutions $(v,\eta)$ for the
stochastic equation \eqref{meq11} for the stopping time
$\sup_{M>0}\tau_M$ where
\begin{equation}\label{rst}
\begin{split}
\tau_M:=&\inf\Big{\{}T\geq
0:\;\displaystyle{\sup_{r\in(0,T)}}|\nabla v(0^+,r)|\geq
M\Big{\}},
\end{split}
\end{equation}
up to which $|\nabla v(0^+,r)|=\nabla v(0^+,r)$ stays a.s.
bounded. In the case of the unreflected problem, $\eta$ is just
replaced by zero and the absolute value is kept. In order to
return to the initial variables and to the moving boundary
problem, the stopping time will be further reduced so that the
spread stays a.s. non-negative and the spread area in the domain
$\Omega$. These restrictions will be induced by the Stefan
condition and the resulting spread dynamics \eqref{spdyng} on
$w$, the initial spread $s^+(0)-s^-(0)$, and the magnitude of
$\lambda$.

Deterministic parabolic Stefan problems have been so far
extensively studied when describing the phenomenon of phase
separation of alloys. In \cite{niet1}, Niethammer introduced
%in dimensions $d$
the deterministic version of \eqref{stef1} in higher dimensions
 in the physical context of the LSW theory for the Ostwald
ripening of alloys; there, a first order approximation was
established for the dynamics of the radii of spherical moving
boundaries in dimensions $d=3$. In \cite{af, afk1, afk2}, the
authors considered the quasi-static problem and obtained second
order approximations by taking into account the variable in
general geometry of the solid phase. We also refer to \cite{AKY}
for the analysis of the parabolic Stefan problem of \cite{niet1}
in the presence of kinetic undercooling and additive forcing.

 Antonopoulou, Bitsaki, and Karali, in \cite{ABiK}, derived the rigorous financial
interpretation of the parabolic Stefan stochastic model, which
applies for a portfolio of assets when $d\geq 2$; a quasi-static
version thereof approximates the parabolic one when the diffusion
tends to infinity as in the case of very large trading. In
contrast to the deterministic Stefan problem where a spherical
initial solid phase or the interval $[s^-(0),s^+(0)]$ in dimension
$d=1$ are static solutions, in the stochastic case the boundary
changes as time evolves due to the random perturbation in the
spde; see for example the numerical simulations in \cite{ABiK}
when $d=3$.
%We aim to prove stochastic existence of maximal solutions of the
% problem \eqref{stef1} following the approach of
%Antonopoulou, Karali, Millet, \cite{AKM}, and of \cite{BH}. A
%future direction, not in the scopes of this paper, is the
%investigation of the regularity of solutions of \eqref{stef1},
%such as path continuity, and then of solution's density by
%Malliavin calculus as in \cite{AFK}.
%\begin{remark}
When the sell and buy orders densities %$w_{\rm s}$, $w_{\rm b}$
 are observed separately, then the evolution is described by a system
of two stochastic Heat equations with different liquidity
coefficients and volatilities depending on the distances
$|x-s^+(t)|$, $|x-s^-(t)|$ respectively.
%\begin{equation}\label{stef2}\left\{
%\begin{aligned}
%&\partial_t w_{\rm s}=\alpha_{\rm s}\Delta w_{\rm s}
%+\sigma(|x-s^+(t)|)\dot{W}(x,t),\;\;\;\;x>s^+(t)\;\;\;\;\;\;\;\;\;\;\;\;\;\;\;\;\;\;\mbox{(sell `liquid' phase)},\;\;t>0,\\
%&\partial_t w_{\rm b}=\alpha_{\rm b}\Delta w_{\rm b}
%+\sigma(|x-s^-(t)|)\dot{W}(x,t),\;\;x<s^-(t)\;\;\;\;\;\;\;\;\;\;\;\;\;\;\;\;\;\;\mbox{(buy `liquid' phase)},\;\;t>0,\\
%&w=0,\;\;x\in
%S(t)=(s^-(t),s^+(t))\;\;\;\;\;\;\;\;\;\;\;\;\;\;\;\;\;\;\;\;\;\;\;\;\;\;\;\;\;\;\;\;\;\;\;\;\;\;\;\;\;
%\;\;\;\;\;\;\;\;\;\;\;\;\;\;\;\;\;\;\;\;\;\;\;\;\;\mbox{(`solid' phase)},\\
%&V:=\partial_t s^+(t)-\partial_t s^-(t)=-\nabla
%w\eta|_{\partial(S(t))}=\nabla w_{\rm s}(s^+(t),t)-\nabla w_{\rm
%b}(s^-(t),t)\;\;
%\;\;\;\;\;\mbox{(Stefan condition)},\\
%\partial
%&S(0)=\{s^-(0),s^+(0)\}={\rm given}.
%\end{aligned}
%\right.
%\end{equation}
%Here, $w=w_s$, $w=w_b$ on the sell and buy liquid phases
%respectively.
Hambly, and Kalsi proved in \cite{BH} existence and uniqueness of
stochastic solutions for such two phases Stefan systems with
reflection,
 but under the assumption of zero spread for the
asset price, i.e., for $s^+(t)=s^-(t)=s(t)$. Considering 2-phases
1-dimensional stochastic Stefan systems for the evolution of sell
and buy orders without spread we refer also to \cite{Ek,zz}, and
to the more recent results of \cite{mu,BH2}.

\subsection{Limit order books and spread}
An asset is defined as volatile if the corresponding trading
price of sell or buy orders deviates from the mid (mean) price.
The spread's length which is given as the difference between the
actual sell price and the buy price is a measure for the risk of
investment, \cite{AM}. In particular, highly traded assets tend
to have very small spreads, while a relatively large spread
indicates a higher risk. An order is a commitment from the
traders, a buyer or a seller, to buy or sell respectively at an
appropriate price at a given time $t>0$, for which the profit of
the trade is maximized for both sides, \cite{GP}, also called
limit price. The spread and the density of transactions reflect
asset's liquidity. The total volume of active limit orders in a
financial market at a given time is stored in the asset's limit
order book. The liquidity coefficient $\alpha>0$ in the
Stochastic Heat equation of the Stefan problem measures the
diffusion strength of sell and buy orders and can be
approximated, in small time periods, by the total volume of
orders divided by an average spread, \cite{ABiK}.

The various types of financial contracts require
 two entities, that is the holder of a financial
asset, such as a security, commodity, or currency, who receives
the future payments, and the issuer side which has the obligation
to deliver the payments according to the initial terms and claims
of transaction. An order is a commitment to buy or sell at a
given time. Market orders are executed immediately upon
submission in contrast to limit orders which remain active until
they achieve the expected `closing' price; this is an automatic
execution procedure via online electronic platforms. Electronic
trading platforms offer the ability to trade upon information
from historical data like past market prices and curves of prices
of stocks (old trading view). The market orders are based on the
current market prices, while limit orders target to better future
prices for maximizing profits, \cite{RobertMcDonh786ald}. Limit
orders are low risk commitments since the price of execution for
sell or buy is predetermined
 reducing thus the odds of significant failure. However,
the process is time consuming and the order may never be
 executed.

In \cite{connectliq} the German power market liquidity was
studied, we also refer to \cite{liqspread} for a statistical
analysis of the fluctuations of the average spread where the
relation of spread with shares volume and volatility was
examined, or to \cite{evolutbidask} for a stochastic equation
model estimating the liquidity risk. In \cite{sc88fee}, the
authors analyzed how transaction costs affect the spreads while in
case of zero cost then the market price should act as a Wiener
process; see also in \cite{trnacost} for the liquidity risk with
respect to the transaction costs and market manipulation under a
Brownian motion problem formulation, or in
\cite{mfkorott,Rollmeth, Hasbrouckt,Bleaney}, and in
\cite{Ekinci} for various empirical approaches on spread's
forecast. We note that except from the bid-ask spread, there
exist several other types of spread like the asset swap spread,
the yield spread, the zero volatility spread, the option adjusted
spread, the default swap spread, or the bank spreads, see for
example in \cite{domocane,Berd1,Berd2,Berd3,gropbank,Hosaunders}.

\subsection{Main Results}
Our analysis covers 3 versions of the Stefan problem.
\begin{enumerate}
\item
Let $w_1,w_2\geq 0$ be the density of sell orders and buy orders
respectively. When $x> s^+(t)$ then only sell orders are executed
($w_2=0$), while when $x< s^-(t)$ then only buy orders are
executed ($w_1=0$). Moreover at $x=s^+(t)$ $w_1=0$ and at
$x=s^-(t)$ $w_2=0$. The signed density $w=w_1-w_2$ is given by
\begin{equation}\label{sd1}
w(x,t)= w_1(x,t) \;\;\text{if}\;\;x> s^+(t), \;\;w(x,t)=
    -w_2(x,t) \;\;\text{if}\;\; x< s^-(t),\;\;
    w(x,t)=0\;\; \text{otherwise}.
\end{equation}
We introduce in \eqref{stef1} the additive term $\dot{\eta}_s$
defined by
\begin{equation}\label{m1}
\dot{\eta}_s(x,t):=
      \dot{\eta}_1(x-s^+(t),t)\;\; \text{if}\;\;x\geq s^+(t),\;\;
    \dot{\eta}_s(x,t):=-\dot{\eta}_2(-x+s^-(t),t)\;\; \text{if}\;\;x\leq s^-(t),
\end{equation}
    where $\eta_1$,
$\eta_2$ are reflection measures so that $w_1,w_2\geq 0$.
\item
We consider the reflected problem where $w\geq 0$. The Stefan
condition due to the non-negativity of $w$ which vanishes at
$x=s^\pm$ yields an a.s. decreasing spread. The reflection
additive term on \eqref{stef1} is of the form
\begin{equation}\label{m11} \dot{\eta}_s(x,t):=
    \dot{\eta}_1(x-s^+(t),t)\;\; \text{if}\;\;x\geq s^+(t), \;\;
    \dot{\eta}_s(x,t):=\dot{\eta}_2(-x+s^-(t),t) \;\; \text{if}\;\;x\leq s^-(t),
\end{equation}
    where $\eta_1$,
$\eta_2$ are reflection measures keeping $w\geq 0$ for any
$x\in\Omega_{\rm Liq}$.
\item The unreflected
problem is analyzed with a signed density $w$ where as in (1) the
spread is non-monotone.
\end{enumerate}

In all the above cases we derive a system of independent spdes of
the form \eqref{meq11} for $v=v_1$, $v=v_2$. Then $s^+,s^-$ are
specified through integration of the Stefan condition. For
stopping times wherein $s^-\leq s^+$ and
$(s^-,s^+)\subseteq\Omega$ by applying the change of variables
\eqref{cv1gen}, $v_1\rightarrow w|_{x\geq s^+}$, $v_2\rightarrow
-w|_{x\leq s^-}$ in (1) or $w|_{x\leq s^-}$ in (2) and (3), we
return to the initial Stefan problem. The suggested
transformation is efficient on representing the stochastic
equation of the Stefan problem as a system of independent spdes
posed on the fix domain $\mathcal{D}=(0,\lambda)$, of the same
general form. Additionally, for the reflected equations, we
impose the non-negativity of $v_{1,2}$ by proving existence of
the measures $\eta_{1,2}$ on the fix domain which then define the
additive reflection term in the initial equation. Our novel
approach on transforming first the problem to an spde of
reference and then establishing maximal solutions to the initial
one by using the Stefan condition for the stopping times is also
applicable for various other one-dimensional versions with
financial interest being analyzed for example in \cite{BH,BH2,mu}
without spread. Note that our model permits zero spread. The
noise diffusion and the noise depend on the distance of $x$ from
the spread area boundary and not on the position of $x$. This
yields, since the velocity is given by the standard Stefan
condition of \eqref{stef1}, to spdes in the $y$ variables where
$s^{\pm}$, that belong to the initial problem unknowns, are
absent. We also mention that variables of the form $y=-x+s(t)$
when $x\leq s$, $y=x-s(t)$ when $x\geq s$ for the zero spread
model where $s$ is the sell/buy price, are used in \cite{BH,mu}.
In \cite{BH} the problem is not transformed, a weak solution
formulation for proper test functions compactly supported in
$[0,1]$ induces somehow a relevant (not splited) system posed on
the fixed domain $(0,1)$ that seems to facilitate the authors
proof of maximal solutions.

In Section 2, we present analytically the change of variables
$y=x-s^+$, $y=-x+s^-$ for $x\in\Omega_{\rm Liq}$, use the Stefan
condition and derive per case the Stefan problems as systems of
two independent spdes of the form \eqref{meq11} for $v=v_1,v_2$,
cf. \eqref{stefr1y}, \eqref{stefr2y}, \eqref{stefr3y}. Section 3
is devoted to the existence of unique weak maximal solutions
$(v,\eta)$ of the Dirichlet problem on $\mathcal{D}$ for
\eqref{meq11} with reflection, and then of maximal solutions to
the initial variables with stopping times restricted by the
Stefan condition, the non-negativity of spread and the
boundedness of the liquid phase. In detail, we write the spde in
an integral form using the Green's function of the negative
Dirichlet Laplacian and construct an approximate Picard scheme
for the truncated problem. In Theorem \ref{t1} we prove existence
and uniqueness a.s. for the Picard approximations, and on the
limit existence and uniqueness of the truncated solution. For
this, we use some of the Green's estimates of \cite{BH} and a
proper Banach space introduced therein. The reflection measure
$\eta$ is associated to the obstacle problem estimated in
\cite{BH}. In Theorem \ref{t2}, using the consistency of the
truncated solutions we prove existence of a unique maximal
solution $(v,\eta)$ a.s. in the maximal time interval
$[0,\displaystyle{\sup_{M>0}}\tau_M)$ for $\tau_M$ given by
\eqref{rst}. Given the maximal solution $(v,\eta)$, for
$v=v_{1,2}\geq 0$, $\eta=\eta_{1,2}$, in Theorem \ref{t31} we
prove existence of unique maximal solutions $(w_1,\eta_1)$,
$(w_2,\eta_2)$ to the reflected Stefan problem
\eqref{stefr}-\eqref{psi}-\eqref{refl} corresponding to (1), and
of $w|_{x\geq s^+}=w_1\geq 0$, $w|_{x\leq s^-}=-w_2\leq 0$, in the
maximal interval $\mathcal{I}_1:=[0,\hat{\tau})$ for $\hat{\tau}:=
\min\{\displaystyle{\sup_{M>0}}\tau_{1M},\tau_{1s},\tau_1^*\}$,
with $\tau_{1M},\tau_{1s},\tau_1^*$ given by \eqref{rtauM1},
\eqref{psp1}, \eqref{nha1} for which the spread exists and stays
a.s. non-negative for any $t\in\mathcal{I}_1$. An analogous result
for the case (2) is proven in Theorem \ref{t32} but in a different
maximal interval $\mathcal{I}_2:=[0,\hat{\tau})$ for $\hat{\tau}:=
\min\{\displaystyle{\sup_{M>0}}\tau_{1M},\tau_{2s}\}$, with
$\tau_{1M},\tau_{2s}$ given by \eqref{rtauM1}, \eqref{psp2}.
There, the decreasing property of the spread is used.

In Section 4 we consider the Stefan problem without reflection,
i.e., (3), and the Dirichlet problem on $\mathcal{D}$ for the spde
\eqref{meqmain3} that $v$ satisfies. Theorem \ref{t1d}
establishes existence and uniqueness a.s. of the truncated
equation, and Theorem \ref{t2d} existence of a unique maximal
solution $v$ in $[0,\displaystyle{\sup_{M>0}}\tau_M)$ for
$\tau_M$ as in \eqref{rst}. Then the existence and uniqueness of
maximal solution in the initial variables is proven in Theorem
\ref{t33} for the resulting stopping time. We also present some
formal asymptotics for very large liquidity coefficient $\alpha$,
when the noise is only time dependent (for example the formal
derivative of a Brownian) and for constant noise diffusion. Under
the assumption that $w$ approximates a mean-field value
$w_\infty(t)$ when the distance from the spread boundary is very
large, we derive that the spread is given by the integral of the
solution of a stochastic linear diffusion equation, see the
dynamics in \eqref{form1}, and the linear sde \eqref{form2}.

%%%%%%%%%%%%%%%%%%%%%%%%%%%%%%55
%%%%%%%%%%%%%%%%%%%%%%%%%%%%%%%%%
\section{The Stefan problems}
%\vspace{0.5cm}
\subsection{Change of variables}
We consider $\Omega$ given by \eqref{ome}, $\Omega_{\rm Liq}$ by
\eqref{lam}, and $y$ defined by \eqref{cv1gen} for any
$x\in\Omega_{\rm Liq}\cup\{s^-,s^+\}$. Let $\tilde{w}_1(x,t)$ be
defined in $\{x\in\Omega_{\rm Liq}\cup\{s^-,s^+\}:\;x\geq s^+\}$
and $\tilde{w}_2(x,t)$ be defined in $\{x\in\Omega_{\rm
Liq}\cup\{s^-,s^+\}:\;x\leq s^-\}$ and set for $y:=x-s^+$
$$\tilde{w}_1(x,t):=\tilde{v}_1(y,t)\;\; \forall\;x\in\Omega_{\rm
Liq}\cup\{s^-(t),s^+(t)\}:\;x\geq s^+(t),$$ while for $y:=-x+s^-$
$$\tilde{w}_2(x,t):=\tilde{v}_2(y,t)\;\; \forall\;x\in\Omega_{\rm
Liq}\cup\{s^-(t),s^+(t)\}:\;x\leq s^-(t).$$ If $x\geq s^+(t)$ we
get
\begin{equation}\label{eqs1}
\begin{split}
&\tilde{w}_1(x,t)=\tilde{v}_1(x-s^+(t),t)=\tilde{v}_1(y,t),\;\;\;y=x-s^+(t),\;\;y_x=1,\;\;y_t=-\partial_ts^+(t)\\
&(\tilde{w}_1)_t(x,t)=(\tilde{v}_1)_y(y,t)y_t(y,t)+(\tilde{v}_1)_t(y,t)
=-\partial_ts^+(t)
(\tilde{v}_1)_y(y,t)+(\tilde{v}_1)_t(y,t),\\
&(\tilde{w}_1)_x(x,t)=(\tilde{v}_1)_y(y,t)y_x=+(\tilde{v}_1)_y(y,t),\\
&(\tilde{w}_1)_{xx}(x,t)=(\tilde{v}_1)_{yy}(y,t)(y_x(y,t))^2=(\tilde{v}_1)_{yy}(y,t),
\end{split}
\end{equation}
and if $x\leq s^-(t)$
\begin{equation}\label{eqs2}
\begin{split}
&\tilde{w}_2(x,t)=\tilde{v}_2(-x+s^-(t),t)=\tilde{v}_2(y,t),\;\;y=-x+s^-(t),\;\;y_x=-1,\;\;y_t=\partial_ts^-(t)\\
&(\tilde{w}_2)_t(x,t)=(\tilde{v}_2)_y(y,t)y_t(y,t)+(\tilde{v}_2)_t(y,t)
=\partial_ts^-(t)
(\tilde{v}_2)_y(y,t)+(\tilde{v}_2)_t(y,t),\\
&(\tilde{w}_2)_x(x,t)=(\tilde{v}_2)_y(y,t)y_x=-(\tilde{v}_2)_y(y,t),\\
&(\tilde{w}_2)_{xx}(x,t)=(\tilde{v}_2)_{yy}(y,t)(y_x(y,t))^2=(\tilde{v}_2)_{yy}(y,t).
\end{split}
\end{equation}

\subsection{Case 1}

Let for any $x\in\Omega$ the signed density $w$ be given by
$$w(x,t)=w_1(x,t)-w_2(x,t)=
  \begin{cases}
    w_1(x,t) & \text{if} \;\;x> s^+(t), \\
    -w_2(x,t) & \text{if}\;\; x< s^-(t),\\
    0        & \text{otherwise},
  \end{cases}$$
for $w_1,w_2$ the densities of sell orders and buy orders
respectively. We then have $w(x,t)|_{x\geq s^+}=w_1(x,t)$,
$w(x,t)|_{x\leq s^-}=-w_2(x,t)$.

The equation \eqref{stef1} by introducing the additive term
$\dot{\eta}_s$ given by \eqref{m1} takes in $\Omega_{\rm{Liq}}$
the form
$$\partial_t w=\alpha\Delta w +\sigma({\rm dist}(x,\partial
S))\dot{W}_s(x,t)+\dot{\eta}_s(x,t),\;\;\;\;\;\;\;\;x\in\Omega_{\rm{Liq}},\;\;t>0,$$
or equivalently for $x\in\Omega_{\rm Liq}$
\begin{equation}\label{stefr}
\begin{split}
\partial_t
w_1=&\alpha\Delta w_1
+\sigma(x-s^+(t))\dot{W}(x-s^+(t),t)+\dot{\eta}_1(x-s^+(t),t),\;\;\;\;\;\;\;\;\;\;\;x>
s^+(t),\;\;t>0,\\
\partial_t
w_2=&\alpha\Delta w_2
-\sigma(-x+s^-(t))\dot{W}(-x+s^-(t),t)+\dot{\eta}_2(-x+s^-(t),t),\;\;x<
s^-(t),\;\;t>0,
\end{split}
\end{equation}
while
$w(x,t)=w_1(x,t)=w_2(x,t)=0,\;\;\;\forall\;x\in[s^-(t),s^+(t)],\;\;\forall\;t>0,$
and $w_1(x,0)=w(x,0)$ for any $x\geq s^+(0)$, $w_2(x,0)=-w(x,0)$
for any $x\leq s^-(0)$. We shall assume that
$w_1(x,0),w_2(x,0)\geq 0$. The reflection measures $\eta_1$,
$\eta_2$ if exist will keep $w_1,w_2\geq 0$ for all $t$ a.s.
Using the Stefan condition \eqref{stefcondgen}, we obtain
\begin{equation}\label{stefcond1}
 \begin{split}
V(s^+(t),t)=&\partial_t s^{+}(t)=-(\nabla w)^+(s^+(t),t)=-(\nabla
w_1)^+(s^+(t),t),\\
V(s^-(t),t)=&\partial_t s^{-}(t)=-(\nabla w)^-(s^-(t),t)=(\nabla
w_2)^-(s^-(t),t),
\end{split}
\end{equation}
and so the spread dynamics are given by
\begin{equation}\label{spdyn1}
\partial_t s^+(t)-\partial_t s^-(t)=-(\nabla
w_1)^+(s^+(t),t)-(\nabla w_2)^-(s^-(t),t)).
\end{equation}

We apply the change of variables $w_1(x,t)=v_1(y,t)$ for
$y=x-s^+$ and so $(\nabla w_1)^+(s^+,t)=\nabla v_1(0^+,t)$, and
$w_2(x,t)=v_2(y,t)$ for $y=-x+s^-$ and so $(\nabla
w_2)^-(s^-,t)=-\nabla v_2(0^+,t)$, use \eqref{eqs1}, \eqref{eqs2},
and \eqref{stefcond1} which yields that $\partial_ts^+(t)=-(\nabla
w_1)^+(s^+(t),t)=-\nabla v_1(0^+,t)$, and that
$\partial_ts^-(t)=(\nabla w_2)^-(s^-(t),t)=-\nabla v_2(0^+,t)$,
and derive the system of two independent initial and boundary
value problems
\begin{equation}\label{stefr1y}
\begin{split}
\partial_t
v_1(y,t)=&\alpha\Delta v_1(y,t)+\partial_ts^+(t)\nabla v_1(y,t)
+\sigma(y)\dot{W}(y,t)+\dot{\eta}_1(y,t)\\
=&\alpha\Delta v_1(y,t)-\nabla v_1(0^+,t)\nabla v_1(y,t)
+\sigma(y)\dot{W}(y,t)+\dot{\eta}_1(y,t),\;\;y\in(0,\lambda)=:\mathcal{D},\;\;t>0,\\
&v_1(0,t)=v_1(\lambda,t)=0,\;\;t>0,\;\;v_1(y,0)=w(y+s^+(0),0)\geq 0,\;\;y\in\mathcal{D},\\
&{\rm and}\\
\partial_t
v_2(y,t)=&\alpha\Delta v_2(y,t)-\partial_t s^-(t)\nabla v_2(y,t)
-\sigma(y)\dot{W}(y,t)+\dot{\eta}_2(y,t)\\
=&\alpha\Delta v_2(y,t)+\nabla v_2(0^+,t)\nabla v_2(y,t)
-\sigma(y)\dot{W}(y,t)+\dot{\eta}_2(y,t),\;\;y\in(0,\lambda)=:\mathcal{D},\;\;t>0,\\
&v_2(0,t)=v_2(\lambda,t)=0,\;\;t>0,\;\;v_2(y,0)=-w(s^-(0)-y,0)\geq 0,\;\;y\in\mathcal{D},\\
\end{split}
\end{equation}
where we used the Dirichlet b.c.
$v_1(\lambda,t)=v_2(\lambda,t)=0$.

By using \eqref{spdyn1}, the spread evolution is given by
\begin{equation}\label{spdyn11}
\partial_t (s^+(t)-s^-(t))=-\nabla v_1(0^+,t)+\nabla v_2(0^+,t).
\end{equation}

\subsection{Case 2}

Let for any $x\in\Omega$ the density $w$, and define
$$w_1(x,t):=w(x,t)\;\;x\geq s^+,\;\;w_2(x,t)=w(x,t)\;\;x\leq s^-,$$
and so
$$w(x,t)=\begin{cases}
    w_1(x,t) & \text{if}\;\;x> s^+(t), \\
    w_2(x,t) & \text{if}\;\;x< s^-(t),\\
    0        & \text{otherwise}.
  \end{cases}$$
In this case, we have $w(x,t)|_{x\geq s^+}=w_1(x,t)$,
$w(x,t)|_{x\leq s^-}=w_2(x,t)$.

We introduce in \eqref{stef1} the additive term
$\dot{\eta}_s(x,t)$ given by \eqref{m11}, for $\eta_1$, $\eta_2$
there reflection measures keeping $w_1,w_2\geq 0$ and thus $w\geq
0$. The equation on $\Omega_{\rm Liq}$ takes the form
$$\partial_t w=\alpha\Delta w +\sigma({\rm dist}(x,\partial
S))\dot{W}_s(x,t)+\dot{\eta}_s(x,t),\;\;\;\;\;\;\;\;x\in\Omega_{\rm
Liq},\;\;t>0,$$ or equivalently for $x\in\Omega_{\rm Liq}$
\begin{equation}\label{stefrr}
\begin{split}
\partial_t
w_1=&\alpha\Delta w_1
+\sigma(x-s^+(t))\dot{W}(x-s^+(t),t)+\dot{\eta}_1(x-s^+(t),t),\;\;\;\;\;\;\;\;\;\;\;x>
s^+(t),\;\;t>0,\\
\partial_t
w_2=&\alpha\Delta w_2
+\sigma(-x+s^-(t))\dot{W}(-x+s^-(t),t)+\dot{\eta}_2(-x+s^-(t),t),\;\;x<
s^-(t),\;\;t>0,
\end{split}
\end{equation}
while
$w(x,t)=w_1(x,t)=w_2(x,t)=0,\;\;\;\forall\;x\in[s^-(t),s^+(t)],\;\;\forall\;t>0,$
and $w_1(x,0)=w(x,0)$ for any $x\geq s^+(0)$, $w_2(x,0)=w(x,0)$
for any $x\leq s^-(0)$. We shall assume that
$w_1(x,0),w_2(x,0)\geq 0$. The reflection measures $\eta_1$,
$\eta_2$ if exist will keep $w_1,w_2,w\geq 0$ for all $t$ a.s.
Using the Stefan condition \eqref{stefcondgen}, we obtain
\begin{equation}\label{stefcond2}
 \begin{split}
V(s^+(t),t)=&\partial_t s^{+}(t)=-(\nabla w)^+(s^+(t),t)=-(\nabla
w_1)^+(s^+(t),t),\\
V(s^-(t),t)=&\partial_t s^{-}(t)=-(\nabla w)^-(s^-(t),t)=-(\nabla
w_2)^-(s^-(t),t),
\end{split}
\end{equation}
and so the spread dynamics are given by
\begin{equation}\label{spdyn2}
\partial_t s^+(t)-\partial_t s^-(t)=-(\nabla
w_1)^+(s^+(t),t)+(\nabla w_2)^-(s^-(t),t)).
\end{equation}

We apply the change of variables $w_1(x,t)=v_1(y,t)$ for
$y=x-s^+$ and so $(\nabla w_1)^+(s^+,t)=\nabla v_1(0^+,t)$, and
$w_2(x,t)=v_2(y,t)$ for $y=-x+s^-$ and so $(\nabla
w_2)^-(s^-,t)=-\nabla v_2(0^+,t)$, use \eqref{eqs1}, \eqref{eqs2},
and \eqref{stefcond1} which yields that $\partial_ts^+(t)=-(\nabla
w_1)^+(s^+(t),t)=-\nabla v_1(0^+,t)$, and that
$\partial_ts^-(t)=-(\nabla w_2)^-(s^-(t),t)=\nabla v_2(0^+,t)$, to
derive the system of two independent initial and boundary value
problems
\begin{equation}\label{stefr2y}
\begin{split}
\partial_t
v_1(y,t)=&\alpha\Delta v_1(y,t)+\partial_ts^+(t)\nabla v_1(y,t)
+\sigma(y)\dot{W}(y,t)+\dot{\eta}_1(y,t)\\
=&\alpha\Delta v_1(y,t)-\nabla v_1(0^+,t)\nabla v_1(y,t)
+\sigma(y)\dot{W}(y,t)+\dot{\eta}_1(y,t),\;\;y\in(0,\lambda)=:\mathcal{D},\;\;t>0,\\
&v_1(0,t)=v_1(\lambda,t)=0,\;\;t>0,\;\;v_1(y,0)=w(y+s^+(0),0)\geq 0,\;\;y\in\mathcal{D},\\
&{\rm and}\\
\partial_t
v_2(y,t)=&\alpha\Delta v_2(y,t)-\partial_t s^-(t)\nabla v_2(y,t)
+\sigma(y)\dot{W}(y,t)+\dot{\eta}_2(y,t)\\
=&\alpha\Delta v_2(y,t)-\nabla v_2(0^+,t)\nabla v_2(y,t)
+\sigma(y)\dot{W}(y,t)+\dot{\eta}_2(y,t),\;\;y\in(0,\lambda)=:\mathcal{D},\;\;t>0,\\
&v_2(0,t)=v_2(\lambda,t)=0,\;\;t>0,\;\;v_2(y,0)=w(s^-(0)-y,0)\geq 0,\;\;y\in\mathcal{D}.\\
\end{split}
\end{equation}

By using \eqref{spdyn1}, the spread evolution is given by
\begin{equation}\label{spdyn22}
\partial_t (s^+(t)-s^-(t))=-\nabla v_1(0^+,t)-\nabla v_2(0^+,t).
\end{equation}
As we already mentioned, if the reflection measures exist and keep
$v_1,v_2\geq 0$ and since $v_1=v_2=0$ at $y=0$, then the spread is
decreasing.
\subsection{Case 3}
As in Case 2, we consider for any $x\in\Omega$ the signed density
$w$, and define
$$w_1(x,t):=w(x,t)\;\;x\geq s^+,\;\;w_2(x,t)=w(x,t)\;\;x\leq
s^-.$$ We do not require here $w\geq 0$ and so we consider the
unreflected equation \eqref{stef1} posed in $\Omega_{\rm
Liq},\;\;t>0,$ or equivalently for $x\in\Omega_{\rm Liq}$
\begin{equation}\label{stefrrr}
\begin{split}
\partial_t
w_1=&\alpha\Delta w_1
+\sigma(x-s^+(t))\dot{W}(x-s^+(t),t),\;\;\;\;\;\;\;\;\;\;\;x>
s^+(t),\;\;t>0,\\
\partial_t
w_2=&\alpha\Delta w_2
+\sigma(-x+s^-(t))\dot{W}(-x+s^-(t),t),\;\;x< s^-(t),\;\;t>0,
\end{split}
\end{equation}
while
$w(x,t)=w_1(x,t)=w_2(x,t)=0,\;\;\;\forall\;x\in[s^-(t),s^+(t)],\;\;\forall\;t>0,$
and $w_1(x,0)=w(x,0)$ for any $x\geq s^+(0)$, $w_2(x,0)=w(x,0)$
for any $x\leq s^-(0)$. Using the Stefan condition
\eqref{stefcondgen}, we obtain \eqref{stefcond2} again for the
velocity and the spread dynamics are given by \eqref{spdyn2}. We
apply the change of variables $w_1(x,t)=v_1(y,t)$ for $y=x-s^+$,
and $w_2(x,t)=v_2(y,t)$ for $y=-x+s^-$ to obtain as in Case 2 the
system of two independent initial and boundary value problems
\begin{equation}\label{stefr3y}
\begin{split}
\partial_t
v_1(y,t)=&\alpha\Delta v_1(y,t)-\nabla v_1(0^+,t)\nabla v_1(y,t)
+\sigma(y)\dot{W}(y,t),\;\;y\in(0,\lambda)=:\mathcal{D},\;\;t>0,\\
&v_1(0,t)=v_1(\lambda,t)=0,\;\;t>0,\;\;v_1(y,0)=w(y+s^+(0),0),\;\;y\in\mathcal{D},\\
&{\rm and}\\
\partial_t
v_2(y,t)=&\alpha\Delta v_2(y,t)-\nabla v_2(0^+,t)\nabla v_2(y,t)
+\sigma(y)\dot{W}(y,t),\;\;y\in(0,\lambda)=:\mathcal{D},\;\;t>0,\\
&v_2(0,t)=v_2(\lambda,t)=0,\;\;t>0,\;\;v_2(y,0)=w(s^-(0)-y,0),\;\;y\in\mathcal{D}.\\
\end{split}
\end{equation}
The spread evolution is given as in Case 2 by \eqref{spdyn22}, but
since $v_1$, $v_2$ may change sign even if $v_1=v_2=0$ at $y=0$,
the spread is not monotone.

\vspace{0.5cm}

When reflection measures are considered, i.e., for the Cases 1,2,
each problem's unknowns for $t\in[0,T]$ is a pair $(v,\eta)$ where
the reflection measure $\eta$ is defined to satisfy
\begin{equation}\label{psi}
\begin{split}
&\mbox{for all}\mbox{ measurable functions }
\psi:\;\overline{\mathcal{D}}\times (0,T)\rightarrow [0,\infty)\\
&\int_0^t\int_{\mathcal{D}}\psi(y,s)\eta(dy,ds)\;\;\mbox{is}\;\;\mathcal{F}_t-\mbox{measurable},
\end{split}
\end{equation}
and the constraint
\begin{equation}\label{refl}
\int_0^T\int_{\mathcal{D}} v(y,s)\eta(dy,ds)=0.
\end{equation}

We shall assume that the noise diffusion $\sigma$ is a sufficiently smooth %decaying
function; its minimum regularity will be specified in the sequel. % satisfying for
%some $\alpha\in[0,1)$ the growth condition
%$$|\sigma(x)|\leq C(1+|x|^\alpha)\;\;\forall\;x\in\mathbb{R}.$$
The random measure $W(dy,ds)$ is defined as the $1$-dimensional
space-time white noise induced by the $2$-dimensional Wiener
process $W:=\{W(y,t):\;t\in[0,T],\;y\in (0,\lambda)\}$ which
generates, for any $t\geq 0$, the filtration
$\mathcal{F}_t:=\sigma(W(y,s):\;s\leq t,\;y\in(0,\lambda))$,
where the notation $\sigma$ here denotes the $\sigma$-algebra.

\begin{remark}
In all the above cases, given the solutions $v_1,v_2$ for
$t\in[0,T]$, $s^+(t),\;s^-(t)$ and the spread $s^+(t)-s^-(t)$ are
derived by direct formulae after integration of the Stefan
condition in $[0,t]$.
\end{remark}
\begin{remark}
We observe that the transformed spdes of Cases 1,2,3 are of the
general form \eqref{meq}, i.e.,
\begin{equation*}\label{meq}
v_t(y,t)=\alpha \Delta v(y,t)\mp\nabla v(0^+,t)\nabla
v(y,t)\pm\sigma(y)\dot{W}(y,t)+\dot{\eta}(y,t),
\end{equation*}
posed on $\mathcal{D}:=(0,\lambda)$ for $t\in[0,T]$, with
Dirichlet b.c. $v(x,t)=0$ at $\partial\mathcal{D}$, and $v(y,0)$
given, for $v\geq 0$ when $\eta$ not the zero measure, and signed
$v$ when $\eta\equiv 0$.
\end{remark}
\begin{remark}
Given $v_{1,2}$ for any $y\in\mathcal{D}$ and any $t$ in $[0,T]$,
then the Stefan condition will determine after integration
$s^\pm(t)$ in $(0,T]$. Let $x\in\Omega_{\rm Liq}$ then for any
given $t\in[0,T]$ and any $x\geq s^+(t)$ since $y=x-s^+(t)$,
$w(x,t)$ will be defined by $v_1(x-s^+(t),t)$, while for any
$x\leq s^-(t)$ since $y=-x+s^-(t)$, $w(x,t)$ will be defined by
$-v_2(-x+s^-(t),t)$ for case (1) or by $v_2(-x+s^-(t),t)$ for
Cases 2,3.
\end{remark}
\begin{remark}
Evolution for $v_{1,2}$ will be observed as long as $a\leq s^-\leq
s^+\leq b$, while $a\leq x\leq b$. In particular, consider
$y=\lambda=b-a$. Then if $x\geq s^+$ then $y=b-a=x-s^+\leq b-s^+$
will yield $-a\leq -s^+$ i.e., $s^+\leq a$ and thus $s^+=a$ and
$s^-=s^+=a$ and $x=b$ which is the case when the the spread is
zero and hits the boundary at $b$ and $v_1(\lambda,t)=0$. If
$x\leq s^-$ then $y=b-a=-x+s^-\leq -a+s^-$ will yield $b\leq s^-$
and thus $s^-=b$ and $s^+=s^-=b$ and $x=a$ which is the case when
the the spread is zero and hits the boundary at $a$ and
$v_2(\lambda,t)=0$. When $y=0$ then either $x=s^-$ and
$v_2(0,t)=0$ or $x=s^+$ and $v_1(0,t)=0$. For all $x\in(s^-,s^+)$
the density $w(x,t)$ will be set to $0$. The initial values of
$v_{1,2}$ are well defined through the initial value $w(x,0)$
which is given for all $x\in\mathbb{R}$. We assume that $w(x,0)$
is compactly supported in $\overline{\Omega}$ to obtain a
compatibility condition to $v_{1,2}(\lambda,t)=0$ at $t=0$.

We will analyze in detail in the sequel how the restrictions of a
non-negative spread and spread area in the domain $\Omega$, i.e.,
$a< s^-\leq s^+< b$, reduce the stopping time up to which maximal
solutions $w_{1,2}$ exist.

\end{remark}

\section{Existence of maximal solutions with reflection}
In what follows we shall present the analytical proof of
existence of unique maximal solutions $(v,\eta)$ for the initial
and boundary value problem for
\begin{equation}\label{meqmain}
v_t(y,t)=\alpha \Delta v(y,t)-\nabla v(0^+,t)\nabla
v(y,t)+\sigma(y)\dot{W}(y,t)+\dot{\eta}(y,t),
\end{equation}
posed for any $y$ in $\mathcal{D}=(0,\lambda)$ for $t\in[0,T]$
with Dirichlet b.c., with $v(y,0)$ given, and $\eta$ a reflection
measure satisfying \eqref{psi} and \eqref{refl} keeping $v$
non-negative. As $\alpha>0$ the proof for the 2d i.b.v. problem
of \eqref{stefr1y} of Case 1 is completely analogous, while the
results for Case 3 (unreflected problem) will be derived at a next
section by setting $\eta\equiv 0$. We will keep the absolute
values on $\nabla v(0^+,t)$ appearing in the following proofs
(even if non-negative in \eqref{meqmain}) so that the results are
applicable for these cases directly.
\subsection{Weak formulation} Let us define an $L^2(\mathcal{D})$
basis of eigenfunctions
$w_n:=\sin\Big{(}\frac{n\pi}{\lambda}x\Big{)},\;\;n=0,1,2,\cdots,$
corresponding to the eigenvalues $\mu_n$, $n=0,1,\cdots$ of
$-\Delta u=\mu u,\;\;u(0)=0,\;\;u(\lambda)=0,$ where
$\mu_n:=\frac{n^2\pi^2}{\lambda^2},\;\;n=0,1,2,\cdots$ The
associate Green's function for the negative of the Dirichlet
Laplacian can then be given by
$\frac{2}{\lambda}\displaystyle{\sum_{n=0}^{\infty}}e^{-\mu_n
t}w_n(x)w_n(y),$ see \cite{CH1}, so that the Green's function
corresponding to $-\alpha\Delta$ with Dirichlet b.c. is given by
$G(t,x,y)=\frac{2}{\lambda}\displaystyle{\sum_{n=0}^{\infty}}e^{-\alpha\mu_n
t}w_n(x)w_n(y)$.

We say that $v$ is a weak (analytic) solution of \eqref{meqmain}
if it satisfies for all $\phi=\phi(y)$ in
$C^2(\overline{\mathcal{D}})$ with $\phi(0)=\phi(\lambda)=0$, the
following weak formulation
\begin{align}\label{d1}
\int_{\mathcal{D}}\Big(v(y,t)-v_0(y)\Big)\phi(y)dy=&
\int_0^t \int_{\mathcal{D}}\Big(%\varrho
\alpha\Delta \phi(y)v(y,s)
+\nabla\phi(y)\nabla v(0^+,s) v(y,s)\Big{)}dyds \nonumber \\
&+\int_0^t\int_{\mathcal{D}}\phi(y)\sigma(y)W(dy,
ds)+\int_0^t\int_{\mathcal{D}}\phi(y)\eta(dy, ds).
\end{align}

The solution of \eqref{meqmain} admits for any $y\in\mathcal{D}$,
$t\in[0,T]$, the next integral representation
\begin{equation}\label{g1}
\begin{split}
v(y,t)=&\int_{\mathcal{D}}v_0(z)G(y,z,t)dz\\
&+\int_0^t\int_{\mathcal{D}}\nabla v(0^+,s)\nabla G(y,z,t-s) v(z,s)dzds\\
&+\int_0^t\int_{\mathcal{D}} G(y,z,t-s)\sigma(z)W(dz,
ds)+\int_0^t\int_{\mathcal{D}} G(y,z,t-s)\eta(dz, ds),
\end{split}
\end{equation}
and $\eta$ satisfies \eqref{psi}, \eqref{refl}.
\subsection{Main Theorems}
Let the Banach space $(\mathcal{B},\|\cdot\|_{\mathcal{B}})$
$$\mathcal{B}:=\Big{\{}f\in C(\overline{\mathcal{D}}):\;\exists
f'(0),\;f(0)=f(\lambda)=0\Big{\}},$$ with the norm
$\|\cdot\|_{\mathcal{B}}:\mathcal{B}\rightarrow\mathbb{R}^+$,
defined by
$$\|f\|_{\mathcal{B}}:=\displaystyle{\sup_{y\in\mathcal{D}}}\Big{|}\frac{f(y)}{y}\Big{|}.$$

Let $M>0$ fixed, we define in the Banach space $\mathcal{B}$, as
in \cite{BH}, the operator
$\mathcal{T}_{M}:\mathcal{B}\rightarrow\mathcal{B}$ given for any
$y\in\mathcal{\overline{D}}$ and $u\in\mathcal{B}$ by
\begin{equation}\label{optM}
\mathcal{T}_{M}(u)(y,\cdot)=
  \begin{cases}
   y\min\Big{\{}\frac{u(y,\cdot)}{y},M\Big{\}} & y\neq 0, \\
    0 & y=0.
  \end{cases}
  \end{equation}

We consider a truncated problem through the action of the
operator $\mathcal{T}_M$ on the gradient terms of the spde
\eqref{meqmain} for which we prove the next existence-uniqueness
theorem.
\begin{theorem}\label{t1}
Let the noise diffusion $\sigma$ satisfy
\begin{equation}\label{sigma1}
\sigma\in
C(\overline{\mathcal{D}}),\;\;\sigma(0)=\sigma(\ell)=0,\;\;\exists\;\sigma'(0).
\end{equation} Let also $M>0$ fixed, and some $p\geq p_0>8$,
%for $\nabla v(0^+,t)$ given by the solution $v$ of the
%M-independent Stefan problem \eqref{rsde}, and
and let $v_0(y)\in L^p(\Omega,C[0,T];\mathcal{B})$ be the initial
condition of \eqref{meqmain}. Then there exists a unique week
solution $(v^M,\eta^M)$ with $v^M\in
L^p(\Omega,C[0,T];\mathcal{B})$, depending on $M$, to the
truncated problem
\begin{equation}\label{Mrsde}
\begin{split}
&v^M_t(y,t)=\alpha\Delta v^M(y,t)-\nabla
(\mathcal{T}_M(v^M))(0^+,t)\nabla
(\mathcal{T}_M(v^M))(y,t)\\
&\hspace{1.7cm}+\sigma(y)\dot{W}(y,t)+\dot{\eta}^M(y,t),\;\;t\in(0,T],\;\;y\in\mathcal{D},\\
&v^M(y,0):=v_0(y),\;\;y\in\mathcal{D},\\
&v^M(0,t)=v^M(\lambda,t)=0,\;\;t\in(0,T],\\
%&\nabla (\mathcal{T}_M(v^M))(0,t):=\nabla
%(\mathcal{T}_M(v))(0,t),\;\;t\in(0,T],
\end{split}
\end{equation}
where $T:=T_M>0$ such that
\begin{equation}\label{stoparg}
\displaystyle{\sup_{r\in(0,T)}}|\nabla(\mathcal{T}_M(v^M))(0^+,r)|^p\leq
C_2(M,p)<\infty\;\;a.s.
\end{equation}
More specifically, for any $t\in(0,T)$, $v^M$ satisfies the week
formulation
\begin{equation}\label{nMg1*}
\begin{split}
v^M(y,t)=&\int_{\mathcal{D}}v_0(z)G(y,z,t)dz\\
&+\int_0^t\int_{\mathcal{D}}\nabla (\mathcal{T}_M(v^M))(0^+,s)\nabla G(y,z,t-s) \mathcal{T}_M(v^M)(z,s)dzds\\
&+\int_0^t\int_{\mathcal{D}} G(y,z,t-s)\sigma(z)W(dz,
ds)\\
&+\int_0^t\int_{\mathcal{D}} G(y,z,t-s)\eta^M(dz, ds),
\end{split}
\end{equation}
for $v^M(y,0):=v_0(y)$, and $\eta^M$, satisfies \eqref{psi} and
\eqref{refl}, i.e.,
\begin{equation}\label{npsi*}
\begin{split}
&\mbox{for all}\mbox{ measurable functions }
\psi:\;\overline{\mathcal{D}}\times(0,T)\rightarrow [0,\infty)\\
&\int_0^t\int_{\mathcal{D}}\psi(y,s)\eta^M(dy,ds)\;\;\mbox{is}\;\;\mathcal{F}_t-\mbox{measurable},
\end{split}
\end{equation}
and the constraint
\begin{equation}\label{nrefl*}
\int_0^T\int_{\mathcal{D}} v^M(y,s)\eta^M(dy,ds)=0.
\end{equation}
\end{theorem}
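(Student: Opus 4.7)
The plan is to construct $(v^M,\eta^M)$ by a Picard iteration in the Banach space $X:=L^p(\Omega;C([0,T_M];\mathcal{B}))$, exploiting the fact that the truncation forces $\|\mathcal{T}_M u\|_{\mathcal{B}}\leq M$ and in particular $|\nabla(\mathcal{T}_M u)(0^+)|\leq M$ for every $u\in\mathcal{B}$. I would initialize the scheme with $v^{M,0}\in X$ defined as the unique solution of the driftless reflected heat equation $v_t=\alpha\Delta v+\sigma\dot W+\dot\eta$ with Dirichlet data and initial datum $v_0$; its existence and $\mathcal{B}$-regularity follow from the obstacle SPDE theory together with the Green's function and obstacle bounds imported from \cite{BH}. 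Given $v^{M,n}\in X$, I would then define $v^{M,n+1}\in X$ as the unique reflected solution of
\begin{equation*}
v^{M,n+1}_t = \alpha\Delta v^{M,n+1} - \nabla(\mathcal{T}_M(v^{M,n}))(0^+,t)\,\nabla(\mathcal{T}_M(v^{M,n}))(\cdot,t) + \sigma(y)\dot W + \dot\eta^{M,n+1},
\end{equation*}
whose drift is a bounded $\mathcal{F}_t$-adapted forcing, so existence and uniqueness follow again from the same obstacle machinery.

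For uniform bounds I would use the mild representation \eqref{nMg1*} and estimate in $\mathcal{B}$ term-by-term: the initial term via the Green's kernel estimates of \cite{BH}; the stochastic integral via BDG together with space-time moment bounds on $G$ (this is where $p>8$ enters, to absorb the limited regularity of the kernel); the drift term via $\|\mathcal{T}_M v^{M,n}\|_{\mathcal{B}}\le M$ combined with the estimate for $\|\int\nabla G(\cdot,z,t-s)\mathcal{T}_M(v^{M,n})(z,s)\,dz\,ds\|_{\mathcal{B}}$; and the reflection term via the obstacle bound of \cite{BH}, which dominates the $\eta$-convolution by the preceding contributions. A Gr\"onwall step then delivers $\sup_n\|v^{M,n}\|_{X}<\infty$ on a short interval $[0,T_M]$, and \eqref{stoparg} is immediate from $|\nabla(\mathcal{T}_M v^M)(0^+,r)|\le M$. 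For the contraction I would subtract the two mild equations for $v^{M,n+1}$ and $v^{M,n}$: the initial and stochastic terms cancel, the drift difference telescopes using Lipschitz continuity of $\mathcal{T}_M$ in $\mathcal{B}$ (with constant $1$) together with the uniform $M$-bound on both factors, and the $\eta$-difference is absorbed by the Lipschitz-in-drift property of the obstacle map. This yields
\begin{equation*}
\mathbb{E}\sup_{s\le t}\|v^{M,n+1}-v^{M,n}\|_{\mathcal{B}}^p \le K(M,p)\,t^{\beta}\,\mathbb{E}\sup_{s\le t}\|v^{M,n}-v^{M,n-1}\|_{\mathcal{B}}^p,
\end{equation*}
for some $\beta>0$ arising from the integrability in $(z,s)$ of $\|\nabla G(\cdot,z,t-s)\|_{\mathcal{B}}$; shrinking $T_M$ so that $K(M,p)T_M^{\beta}<1$ closes the contraction and makes $\{v^{M,n}\}$ Cauchy in $X$.

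Passing to the limit I obtain $v^{M,n}\to v^M$ in $X$, and via the same obstacle estimate $\eta^{M,n}$ converges to a non-negative adapted random measure $\eta^M$; the mild equation \eqref{nMg1*} passes directly, the measurability \eqref{npsi*} is preserved, and the complementarity \eqref{nrefl*} follows from lower semicontinuity of $(v,\eta)\mapsto\int v\,d\eta$ under joint convergence in $X$ and weak convergence of measures. Uniqueness reduces to the same contraction applied to any two solutions on $[0,T_M]$. The main obstacle will be handling the reflection measure in both the uniform bound and the contraction step: $\eta^{M,n+1}$ is a nonlinear functional of $v^{M,n+1}$, so a direct norm estimate on $\eta^{M,n+1}$ is unavailable. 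The workaround, which is the essential technical import from \cite{BH}, is to bound the $G$-convolution of $\eta^{M,n+1}$ in the $\mathcal{B}$-norm by the sum of the other three terms in \eqref{nMg1*}, and to do the analogous bound for differences of two obstacle solutions in terms of differences of their drifts. A secondary difficulty is the need to control the pointwise right-derivative $\nabla v(0^+,\cdot)$ in time, which is precisely why the norm $\|u\|_{\mathcal{B}}=\sup_y|u(y)/y|$ is used: it dominates $|\nabla u(0^+)|$ automatically, at the price of requiring sharp gradient bounds for $G$ in $\mathcal{B}$ that are, again, supplied by \cite{BH}.
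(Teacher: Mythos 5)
Your proposal follows essentially the same route as the paper: a Picard iteration in $L^p(\Omega;C([0,T];\mathcal{B}))$, absorption of the reflection measure via the obstacle-problem decomposition and its stability bound from \cite{BH}, the Green's-function estimates in the $\|\cdot\|_{\mathcal{B}}$ norm (with $p>8$ for the stochastic convolution), and the Lipschitz property of the truncation $\mathcal{T}_M$. The only differences are cosmetic: the paper closes the iteration with the factorial decay of iterated time integrals on a fixed interval rather than shrinking $T_M$ to force a strict contraction, and it obtains $\eta^M$ directly as the reflection measure of the obstacle problem driven by the limit $u$ rather than as a weak limit of the measures $\eta^{M,n}$.
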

\begin{proof}
The operator $\mathcal{T}_M:\mathcal{B}\rightarrow\mathcal{B}$ is
well defined, \cite{BH}, and thus, for any $u$ in the space
$\mathcal{B}$, $\mathcal{T}_M(u)$ returns in $\mathcal{B}$, and so
$\mathcal{T}_M(u)\in C(\overline{\mathcal{D}})$ and vanishes at
the boundary of $\mathcal{D}$, while the gradient $\nabla
(\mathcal{T}_M(u))$ at $x=0$ exists.

Motivated by the integral representation \eqref{nMg1*}, we define
through a Picard iteration scheme the approximation $v_n^M$ of
$v^M$ as the solution of the approximate problem
\begin{equation}\label{nMg1}
\begin{split}
v_{n}^M(y,t)=&\int_{\mathcal{D}}v_0(z)G(y,z,t)dz\\
&+\int_0^t\int_{\mathcal{D}}\nabla (\mathcal{T}_M(v^M))(0^+,s)\nabla G(y,z,t-s) \mathcal{T}_M(v_{n-1}^M)(z,s)dzds\\
&+\int_0^t\int_{\mathcal{D}} G(y,z,t-s)\sigma(z)W(dz,
ds)\\
&+\int_0^t\int_{\mathcal{D}} G(y,z,t-s)\eta_n^M(dz,
ds),\;\;n:=1,2\cdots
\end{split}
\end{equation}
for $v_0^M(y,t):=v_0(y)$, %and $\nabla
%\mathcal{T}_M(v_n^M)(0,t):=\nabla
%\mathcal{T}_M(v^M)(0,t),\;\;t\in(0,T]$,
 and $\eta_n^M$, which
approximates $\eta^M$, satisfying \eqref{psi} and \eqref{refl},
i.e.,
\begin{equation}\label{npsi}
\begin{split}
&\mbox{for all}\mbox{ measurable functions }
\psi:\;\overline{\mathcal{D}}\times(0,T)\rightarrow [0,\infty)\\
&\int_0^t\int_{\mathcal{D}}\psi(y,s)\eta_n^M(dy,ds)\;\;\mbox{is}\;\;\mathcal{F}_t-\mbox{measurable},
\end{split}
\end{equation}
and the constraint
\begin{equation}\label{nrefl}
\int_0^T\int_{\mathcal{D}} v_n^M(y,s)\eta_n^M(dy,ds)=0.
\end{equation}

In order to keep $v_{n}^M$ non-negative, and having in mind the
integral property \eqref{nrefl}, we will absorb the reflection
term $\eta_n^M$ in the Picard scheme \eqref{nMg1}, by spliting
$v_n^M$ as follows
\begin{equation}\label{vns}
v_n^M(y,t)=u_n(y,t)+\mathbb{O}_n(y,t),
\end{equation}
where $\mathbb{O}_n(y,t)$ solves in the weak sense the Heat
Equation Obstacle problem for any $y\in\mathcal{D}$, $t\in[0,T]$
\begin{equation}\label{ob}
\begin{split}
&\partial_t\mathbb{O}_n(y,t)=\alpha\Delta
\mathbb{O}_n(y,t)+\tilde{\eta}_n(dy,dt),\;\;u_n+\mathbb{O}_n\geq
0(\Leftrightarrow \mathbb{O}_n\geq -u_n),\\
&\mathbb{O}_n(0,t)=\mathbb{O}_n(\lambda,t)=0,\;\;\mathbb{O}_n(y,0)=0,\\
&\int_0^T\int_{\mathcal{D}}
(u_n(y,s)+\mathbb{O}_n(y,s))\tilde{\eta}_n(dy,ds)=0.
\end{split}
\end{equation}
Note that the above problem has a unique weak solution
$(\mathbb{O}_n,\tilde{\eta}_n)$ as long as $u_n$ exists and is
smooth. We observe that $\mathbb{O}_n(y,0)=0$ yields that
 $u_n(y,0)=v_n^M(y,0)=v_0(y)$.

We define $\eta_n^M:=\tilde{\eta}_n$, and as we shall see it
satisfies \eqref{nMg1} when $v_n^M$ satisfies \eqref{vns}.
Indeed, we replace $v_n^M=u_n(y,t)+\mathbb{O}_n(y,t)$ at the
left-hand side of \eqref{nMg1} and obtain for
$\eta_n^M:=\tilde{\eta}_n$
\begin{equation}\label{nMg2}
\begin{split}
u_{n}&(y,t)+\mathbb{O}_n(y,t)=\int_{\mathcal{D}}v_0(z)G(y,z,t)dz\\
&+\int_0^t\int_{\mathcal{D}}\nabla (\mathcal{T}_M(v^M))(0^+,s)\nabla G(y,z,t-s) \mathcal{T}_M(v_{n-1}^M)(z,s)dzds\\
&+\int_0^t\int_{\mathcal{D}} G(y,z,t-s)\sigma(z)W(dz,
ds)\\
&+\int_0^t\int_{\mathcal{D}} G(y,z,t-s)\eta_n^M(dz,
ds),\\
=&\int_{\mathcal{D}}v_0(z)G(y,z,t)dz\\
&+\int_0^t\int_{\mathcal{D}}\nabla (\mathcal{T}_M(v^M))(0^+,s)\nabla G(y,z,t-s) \mathcal{T}_M(v_{n-1}^M)(z,s)dzds\\
&+\int_0^t\int_{\mathcal{D}} G(y,z,t-s)\sigma(z)W(dz,
ds)\\
&+\int_0^t\int_{\mathcal{D}} G(y,z,t-s)\tilde{\eta}_n(dz,
ds)\;\;n:=1,2\cdots
\end{split}
\end{equation}
Since $\mathbb{O}_n$ solves in the weak sense \eqref{ob}, and
$\mathbb{O}_n(y,0)=0$, then using the same Green's function $G$
for the integral representation of $\mathbb{O}_n$, we see that the
last term of \eqref{nMg2} coincides with $\mathbb{O}_n(y,t)$, so
we obtain
\begin{equation}\label{nMg3}
\begin{split}
u_{n}(y,t)=&\int_{\mathcal{D}}v_0(z)G(y,z,t)dz\\
&+\int_0^t\int_{\mathcal{D}}\nabla (\mathcal{T}_M(v^M))(0^+,s)\nabla G(y,z,t-s) \mathcal{T}_M(v_{n-1}^M)(z,s)dzds\\
&+\int_0^t\int_{\mathcal{D}} G(y,z,t-s)\sigma(z)W(dz,
ds),\;\;n:=1,2\cdots
\end{split}
\end{equation}

We split now $v^M$ by
\begin{equation}\label{svm}
v^M(y,t)=u(y,t)+\mathbb{O}(y,t),
\end{equation}
 and set $\eta^M:=\tilde{\eta}$,
where $(\mathbb{O}(y,t),\tilde{\eta}(y,t))$ solves in the weak
sense the Heat Equation Obstacle problem for any
$y\in\mathcal{D}$, $t\in[0,T]$
\begin{equation}\label{ob0}
\begin{split}
&\partial_t\mathbb{O}(y,t)=\alpha\Delta
\mathbb{O}(y,t)+\tilde{\eta}(dy,dt),\;\;u+\mathbb{O}\geq
0(\Leftrightarrow \mathbb{O}\geq -u),\\
&\mathbb{O}(0,t)=\mathbb{O}(\lambda,t)=0,\;\;\mathbb{O}(y,0)=0,\\
&\int_0^T\int_{\mathcal{D}}
(u(y,s)+\mathbb{O}(y,s))\tilde{\eta}(dy,ds)=0.
\end{split}
\end{equation}
We observe that $\mathbb{O}(y,0)=0$ yields that
 $u(y,0)=v^M(y,0)=v_0(y)$, and as we argued for the derivation of
 \eqref{nMg3}, we obtain that $u$ satisfies
\begin{equation}\label{nMg3u}
\begin{split}
u(y,t)=&\int_{\mathcal{D}}v_0(z)G(y,z,t)dz\\
&+\int_0^t\int_{\mathcal{D}}\nabla (\mathcal{T}_M(v^M))(0^+,s)\nabla G(y,z,t-s) \mathcal{T}_M(v^M)(z,s)dzds\\
&+\int_0^t\int_{\mathcal{D}} G(y,z,t-s)\sigma(z)W(dz, ds).
\end{split}
\end{equation}

Using \eqref{nMg3} for $u_n$, $u_{n-1}$, by substraction, we get
for $n=2,3,\cdots$
\begin{equation}\label{nMg4}
\begin{split}
u_{n}&(y,t)-u_{n-1}(y,t)= \int_0^t\int_{\mathcal{D}}\Big{[}\nabla
(\mathcal{T}_M(v^M))(0^+,s)\mathcal{T}_M(v_{n-1}^M)(z,s)
\\
&-\nabla(\mathcal{T}_M(v^M))(0^+,s)\mathcal{T}_M(v_{n-2}^M)(z,s)\Big{]}\nabla
G(y,z,t-s)dzds\\
=&\int_0^t\int_{\mathcal{D}}\Big{[}\nabla
(\mathcal{T}_M(v^M))(0^+,s)z\min\Big{\{}\frac{v_{n-1}^M(z,s)}{z},M\Big{\}}
\\
&-\nabla(\mathcal{T}_M(v^M))(0^+,s)z\min\Big{\{}\frac{v_{n-2}^M(z,s)}{z},M\Big{\}}\Big{]}\nabla
G(y,z,t-s)dzds.
\end{split}
\end{equation}
In the above, we apply $\|\cdot\|_{\mathcal{B}}$-norm at both
sides and then take $p$-powers for some $p>0$, and then
$\displaystyle{\sup_{t\in(0,T)}}$, and then expectation, to obtain
for $n=2,3,\cdots$
\begin{equation}\label{nMg5}
\begin{split}
&E\Big{(}\displaystyle{\sup_{t\in(0,T)}}\|u_{n}(\cdot,t)-u_{n-1}(\cdot,t)\|^p_{\mathcal{B}}\Big{)}\\
=&
E\Big{(}\displaystyle{\sup_{t\in(0,T)}}\Big{\|}\int_0^t\int_{\mathcal{D}}\Big{[}\nabla
(\mathcal{T}_M(v^M))(0^+,s)z\min\Big{\{}\frac{v_{n-1}^M(z,s)}{z},M\Big{\}}
\\
&-\nabla(\mathcal{T}_M(v^M))(0^+,s)z\min\Big{\{}\frac{v_{n-2}^M(z,s)}{z},M\Big{\}}\Big{]}\nabla
G(y,z,t-s)dzds\Big{\|}^p_{\mathcal{B}}\Big{)}.
\end{split}
\end{equation}
In \cite{BH}, various useful bounds were proven in the norm
$\|\cdot\|_{\mathcal{B}}$ for the heat kernel $G$ defined
explicitly by a different series representation than the standard
 trigonometric series (bounds holding obviously true for $\alpha t$ in
place of the time variable $t$, and $\mathcal{D}=(0,\lambda)$ in
place of $(0,1)$ there). In particular, we use the estimate of
Proposition 4.4. therein, to derive directly for some constant
$c=c(T,p)>0$, that
\begin{equation*}
E(\displaystyle{\sup_{t\in(0,T)}}\|J\|^p_{\mathcal{B}})\leq
c(T,p)\int_0^TE(\displaystyle{\sup_{\tau\in(0,s)}}\|f(\cdot,\tau)\|^p_{\mathcal{B}})ds,
\end{equation*}
for
$$J(y,t):=\int_0^t\int_{\mathcal{D}}f(z,s)\nabla
G(y,z,t-s)dzds,$$ and
\begin{equation*}
\begin{split}
f(z,s):=&\nabla
(\mathcal{T}_M(v^M))(0^+,s)z\min\Big{\{}\frac{v_{n-1}^M(z,s)}{z},M\Big{\}}
\\
&-\nabla(\mathcal{T}_M(v^M))(0^+,s)z\min\Big{\{}\frac{v_{n-2}^M(z,s)}{z},M\Big{\}}.
\end{split}
\end{equation*}
Using the above in \eqref{nMg5} yields for $n=2,3,\cdots$
\begin{equation}\label{nMg6}
\begin{split}
&E\Big{(}\displaystyle{\sup_{t\in(0,T)}}\|u_{n}(\cdot,t)-u_{n-1}(\cdot,t)\|^p_{\mathcal{B}}\Big{)}\\
&\leq cC(T,p) \int_0^T
E\Big{(}\displaystyle{\sup_{\tau\in(0,s)}}\displaystyle{\sup_{z\in\mathcal{D}}}\Big{|}\nabla
(\mathcal{T}_M(v^M))(0^+,\tau)\min\Big{\{}\frac{v_{n-1}^M(z,\tau)}{z},M\Big{\}}
\\
&\hspace{4.5cm}-\nabla(\mathcal{T}_M(v^M))(0^+,\tau)\min\Big{\{}\frac{v_{n-2}^M(z,\tau)}{z},M\Big{\}}\Big{|}^p\Big{)}ds\\
=&cC(T,p) \int_0^T
E\Big{(}\displaystyle{\sup_{\tau\in(0,s)}}\displaystyle{\sup_{z\in\mathcal{D}}}\Big{|}\nabla
(\mathcal{T}_M(v^M))(0^+,\tau)\Big{|}^p\\
&\hspace{4.5cm}\Big{|}\min\Big{\{}\frac{v_{n-1}^M(z,\tau)}{z},M\Big{\}}
-\min\Big{\{}\frac{v_{n-2}^M(z,\tau)}{z},M\Big{\}}\Big{|}^p\Big{)}ds.
\end{split}
\end{equation}
Observing that $a\leq M$ and $b\leq M$ yields
$\min\{a,M\}-\min\{b,M\}=a-b$, while $a\geq M$ and $b\geq M$
yields $\min\{a,M\}-\min\{b,M\}=M-M=0\leq |a-b|$, while $a\leq M$
and $b\geq M$ yields $\min\{a,M\}-\min\{b,M\}=a-M\leq 0\leq
|a-b|$, we have
$$|\min\{a,M\}-\min\{b,M\}|\leq |a-b|,$$ and so, we obtain by
\eqref{nMg6} for $n=2,3,\cdots$
\begin{equation}\label{nMg7}
\begin{split}
&E\Big{(}\displaystyle{\sup_{t\in(0,T)}}\|u_{n}(\cdot,t)-u_{n-1}(\cdot,t)\|^p_{\mathcal{B}}\Big{)}\\
&\leq cC(T,p) \int_0^T
E\Big{(}\displaystyle{\sup_{\tau\in(0,s)}}\displaystyle{\sup_{z\in\mathcal{D}}}\Big{|}\nabla
(\mathcal{T}_M(v^M))(0^+,\tau)\Big{|}^p\\
&\hspace{4.5cm}\Big{|}\min\Big{\{}\frac{v_{n-1}^M(z,\tau)}{z},M\Big{\}}
-\min\Big{\{}\frac{v_{n-2}^M(z,\tau)}{z},M\Big{\}}\Big{|}^p\Big{)}ds\\
&\leq cC(T,p) \int_0^T
E\Big{(}\displaystyle{\sup_{\tau\in(0,s)}}|\nabla(\mathcal{T}_M(v^M))(0^+,\tau)|^p\displaystyle{\sup_{\tau\in(0,s)}}
\displaystyle{\sup_{z\in\mathcal{D}}}
\Big{|}\frac{v_{n-1}^M(z,\tau)}{z}-\frac{v_{n-2}^M(z,\tau)}{z}\Big{|}^p\Big{)}ds\\
&=cC(T,p) \int_0^T
E\Big{(}\displaystyle{\sup_{\tau\in(0,s)}}|(\nabla(\mathcal{T}_M(v^M))(0^+,\tau)|^p\displaystyle{\sup_{\tau\in(0,s)}}
\|v_{n-1}^M(\cdot,\tau)-v_{n-2}^M(\cdot,\tau)\|^p_{\mathcal{B}}\Big{)}ds.
\end{split}
\end{equation}
But by \eqref{vns}, it holds that
\begin{equation}\label{pls}
\begin{split}
\displaystyle{\sup_{\tau\in(0,s)}}
\displaystyle{\sup_{z\in\mathcal{D}}}
\Big{|}\frac{v_{n-1}^M(z,\tau)}{z}-\frac{v_{n-2}^M(z,\tau)}{z}\Big{|}^p\leq
&c(p) \displaystyle{\sup_{\tau\in(0,s)}}
\displaystyle{\sup_{z\in\mathcal{D}}}
\Big{|}\frac{u_{n-1}(z,\tau)}{z}-\frac{u_{n-2}(z,\tau)}{z}\Big{|}^p\\
&+c(p)\displaystyle{\sup_{\tau\in(0,s)}}
\displaystyle{\sup_{z\in\mathcal{D}}}
\Big{|}\frac{\mathbb{O}_{n-1}(z,\tau)}{z}-\frac{\mathbb{O}_{n-2}(z,\tau)}{z}\Big{|}^p\\
\leq &c(p) \displaystyle{\sup_{\tau\in(0,s)}}
\displaystyle{\sup_{z\in\mathcal{D}}}
\Big{|}\frac{u_{n-1}(z,\tau)}{z}-\frac{u_{n-2}(z,\tau)}{z}\Big{|}^p,
\end{split}
\end{equation}
where for the last inequality we used the stability bound in
$\displaystyle{\sup_{\tau\in(0,s)}}
\displaystyle{\sup_{z\in\mathcal{D}}}$ of the obstacle problem
solutions by the obstacle, cf. \cite{BH} in the proof of Theorem
3.2. So, for $s=T$, we obtain by \eqref{nMg7}, \eqref{pls}, and
for $n=2,3,\cdots$
\begin{equation}\label{nMg8}
\begin{split}
c(p)E\Big{(}&\displaystyle{\sup_{t\in(0,T)}}\|v_{n}^M(\cdot,t)-v_{n-1}^M(\cdot,t)\|^p_{\mathcal{B}}\Big{)}\leq
E\Big{(}\displaystyle{\sup_{t\in(0,T)}}\|u_{n}(\cdot,t)-u_{n-1}(\cdot,t)\|^p_{\mathcal{B}}\Big{)}\\
&\leq c(p)2^pC(T,p) \int_0^T
E\Big{(}\displaystyle{\sup_{\tau\in(0,s)}}|\nabla(\mathcal{T}_M(v^M))(0^+,\tau)|^p\displaystyle{\sup_{\tau\in(0,s)}}
\|v_{n-1}^M(\cdot,\tau)-v_{n-2}^M(\cdot,\tau)\|^p_{\mathcal{B}}\Big{)}ds\\
&\leq C(T,p) \int_0^T
E\Big{(}\displaystyle{\sup_{\tau\in(0,s)}}|\nabla(\mathcal{T}_M(v^M))(0^+,\tau)|^p\displaystyle{\sup_{\tau\in(0,s)}}
\|u_{n-1}(\cdot,\tau)-u_{n-2}(\cdot,\tau)\|^p_{\mathcal{B}}\Big{)}ds.
\end{split}
\end{equation}
We apply the same argumentation as for deriving the above
inequality, on \eqref{nMg3} now. By using that $v_0\in
L^p(\Omega,C[0,T];\mathcal{B})$, and the estimate of Proposition
4.3 from \cite{BH}, as $p>8>2$, we obtain for $n=1,2,\cdots$
\begin{equation}\label{nMg9}
\begin{split}
c(p)E\Big{(}&\displaystyle{\sup_{t\in(0,T)}}\|v_{n}^M(\cdot,t)\|^p_{\mathcal{B}}\Big{)}\leq
E\Big{(}\displaystyle{\sup_{t\in(0,T)}}\|u_{n}(\cdot,t)\|^p_{\mathcal{B}}\Big{)}\\
\leq& C(T,p) \int_0^T
E\Big{(}\displaystyle{\sup_{\tau\in(0,s)}}|\nabla(\mathcal{T}_M(v^M))(0^+,\tau)|^p\displaystyle{\sup_{\tau\in(0,s)}}
\|v_{n-1}^M(\cdot,\tau)\|^p_{\mathcal{B}}\Big{)}ds\\
&+CT\displaystyle{\sup_{\tau\in(0,T)}}\|\sigma(\cdot,\tau)\|^p_{\mathcal{B}}+C\\
\leq& C(T,p) \int_0^T
E\Big{(}\displaystyle{\sup_{\tau\in(0,s)}}|\nabla(\mathcal{T}_M(v^M))(0^+,\tau)|^p
\displaystyle{\sup_{\tau\in(0,s)}}
\|u_{n-1}(\cdot,\tau)\|^p_{\mathcal{B}}\Big{)}ds\\
&+CT\displaystyle{\sup_{\tau\in(0,T)}}\|\sigma(\cdot,\tau)\|^p_{\mathcal{B}}+C.
\end{split}
\end{equation}
Here, we used \eqref{vns} and the bound of the solution
$\mathbb{O}_n$ by the obstacle (comparing with the zero solution)
for the first
 and the third inequality. Moreover, since $p>8$ we applied the
estimate of Proposition 4.5 in \cite{BH} to bound the noise term.
In the above, note that since $\sigma$ satisfies \eqref{sigma1},
i.e., $\sigma\in C(\overline{\mathcal{D}})$,
$\sigma(0)=\sigma(\ell)=0$, and that it exists $\sigma'(0)$, then
$\|\sigma\|_{\mathcal{B}}$ is well defined. This assumption
models a zero volatility at the boundary of $\mathcal{D}$ in
accordance to the Dirichlet b.c. for the density of the
transactions $v$, i.e., the solution of \eqref{meqmain}, that
vanishes on $\partial\mathcal{D}$.

Thus, by \eqref{nMg9}, since $\sigma$ satisfies \eqref{sigma1},
$v_n^M,u_n$ stay in $L^p(\Omega,C[0,T];\mathcal{B})$ if
$v_0^M(y,t):=v_0(y)\in L^p(\Omega,C[0,T];\mathcal{B})$ and
$T=T_M$ such that \eqref{stoparg} holds true, i.e.,
\begin{equation*}
\displaystyle{\sup_{r\in(0,T)}}|\nabla(\mathcal{T}_M(v^M))(0^+,r)|^p\leq
C_2(M,p)<\infty\;\;a.s.
\end{equation*}

Furthermore, by \eqref{nMg8}, we get that $v_n^M$, $u_n$ are
Cauchy in $L^p(\Omega,C[0,T];\mathcal{B})$ while they also stay
in $L^p(\Omega,C[0,T];\mathcal{B})$, for $T=T_M$. So, by the
completeness of the Banach space $\mathcal{B}$ in this norm, both
they converge in $L^p(\Omega,C[0,T];\mathcal{B})$ to some unique
$\hat{v}^M,\hat{u}\in L^p(\Omega,C[0,T];\mathcal{B})$ as
$n\rightarrow\infty$. In details, by using \eqref{nMg8}, we obtain
\begin{equation}\label{nMg8new}
\begin{split}
E\Big{(}&\displaystyle{\sup_{t\in(0,T)}}\|v_{n}^M(\cdot,t)-v_{n-1}^M(\cdot,t)\|^p_{\mathcal{B}}\Big{)}\leq
CE\Big{(}\displaystyle{\sup_{t\in(0,T)}}\|u_{n}(\cdot,t)-u_{n-1}(\cdot,t)\|^p_{\mathcal{B}}\Big{)}\\
&\leq \tilde{c} \int_0^T
E\Big{(}\displaystyle{\sup_{\tau\in(0,s)}}|\nabla(\mathcal{T}_M(v^M))(0^+,\tau)
|^p\displaystyle{\sup_{\tau\in(0,s)}}
\|v_{n-1}^M(\cdot,\tau)-v_{n-2}^M(\cdot,\tau)\|^p_{\mathcal{B}}\Big{)}ds\\
&\leq \tilde{c} \int_0^T
E\Big{(}\displaystyle{\sup_{\tau\in(0,s)}}
\|v_{n-1}^M(\cdot,\tau)-v_{n-2}^M(\cdot,\tau)\|^p_{\mathcal{B}}\Big{)}ds\\
&\leq
c^{n-1}\int_0^T\int_0^{s_{n-1}}\int_0^{s_{n-2}}\cdots\int_0^{s_2}1d_{s_1}\cdots
d_{s_{n-2}}d_{s_{n-1}}E\Big{(}\displaystyle{\sup_{\tau\in(0,T)}}
\|v_{1}^M(\cdot,\tau)-v_{0}^M(\cdot,\tau)\|^p_{\mathcal{B}}\Big{)}\\
&\leq C\frac{c^{n-1}}{(n-1)!}\rightarrow
0\;\;\;as\;\;n\rightarrow\infty,
\end{split}
\end{equation}
where we used that $v_1^M,\;v_0^M\in
L^p(\Omega,C[0,T];\mathcal{B})$ since $v_n^M$ stays in
$L^p(\Omega,C[0,T];\mathcal{B})$ for all $n$ if
$v_0^M(y,t):=v_0(y)\in L^p(\Omega,C[0,T];\mathcal{B})$.
Therefore, $v_n^M$, $u_n$ are Cauchy in
$L^p(\Omega,C[0,T];\mathcal{B})$.

Moreover, we also obtain, as in \eqref{nMg9}
\begin{equation}\label{nMg10}
\begin{split}
E\Big{(}&\displaystyle{\sup_{t\in(0,T)}}\|v^M(\cdot,t)\|^p_{\mathcal{B}}\Big{)}\leq
CE\Big{(}\displaystyle{\sup_{t\in(0,T)}}\|u(\cdot,t)\|^p_{\mathcal{B}}\Big{)}\\
\leq &C \int_0^T
E\Big{(}\displaystyle{\sup_{\tau\in(0,s)}}|\nabla(\mathcal{T}_M(v^M))(0^+,\tau)|^p
\displaystyle{\sup_{\tau\in(0,s)}}
\|v^M(\cdot,\tau)\|^p_{\mathcal{B}}\Big{)}ds\\
&+CT\displaystyle{\sup_{\tau\in(0,T)}}\|\sigma(\cdot,\tau)\|^p_{\mathcal{B}}+C\\
\leq &C \int_0^T
E\Big{(}\displaystyle{\sup_{\tau\in(0,s)}}|\nabla(\mathcal{T}_M(v^M))(0^+,\tau)
|^p\displaystyle{\sup_{\tau\in(0,s)}}
\|u(\cdot,\tau)\|^p_{\mathcal{B}}\Big{)}ds\\
&+CT\displaystyle{\sup_{\tau\in(0,T)}}\|\sigma(\cdot,\tau)\|^p_{\mathcal{B}}+C.
\end{split}
\end{equation}
Therefore, by Gronwall's inequality on \eqref{nMg10}, and using
\eqref{stoparg}, \eqref{sigma1}, we arrive at
\begin{equation}\label{nMg11}
\begin{split}
E\Big{(}&\displaystyle{\sup_{t\in(0,T)}}\|v^M(\cdot,t)\|^p_{\mathcal{B}}\Big{)}
+E\Big{(}\displaystyle{\sup_{t\in(0,T)}}\|u(\cdot,t)\|^p_{\mathcal{B}}\Big{)}
\leq C.
%\int_0^T(Cs+C)e^{\int_s^T[C2^pC(\tau,p)4^{-p}C_1(\tau,M,p)]d\tau}ds,
\end{split}
\end{equation}
%where we used \eqref{stoparg} and \eqref{sigma1}.
Thus, we get that $v^M,u\in L^p(\Omega,C[0,T];\mathcal{B})$.
%where $C_1(\tau,M,p)$ satisfies for $0\leq s\leq \tau\leq T$
%\begin{equation}\label{stoparg1}
%\displaystyle{\sup_{r\in(0,\tau)}}|\nabla(\mathcal{T}_M(v^M))(0^+,r)|^p\leq
%C_1(\tau,M,p)\leq C_2(M,p),\;\;a.s.
%\end{equation}
By substraction of the integral representations \eqref{nMg3},
\eqref{nMg3u}, we obtain (as previously when deriving the 4th
inequality of \eqref{nMg8}), that $u_n,u$ satisfy for
$n=1,2,\cdots$
\begin{equation}\label{nMg88new}
\begin{split}
E\Big{(}&\displaystyle{\sup_{t\in(0,T)}}\|u_{n}(\cdot,t)-u(\cdot,t)\|^p_{\mathcal{B}}\Big{)}\\
&\leq C \int_0^T
E\Big{(}\displaystyle{\sup_{\tau\in(0,s)}}|\nabla(\mathcal{T}_M(v^M))(0^+,\tau)
|^p\displaystyle{\sup_{\tau\in(0,s)}}
\|u_{n-1}(\cdot,\tau)-u(\cdot,\tau)\|^p_{\mathcal{B}}\Big{)}ds\\
&\leq \frac{C^{n-1}}{(n-1)!}
E\Big{(}\displaystyle{\sup_{t\in(0,T)}}\|u_{1}(\cdot,t)-u(\cdot,t)\|^p_{\mathcal{B}}\Big{)}\rightarrow
0\;\;as\;\;n\rightarrow\infty.
\end{split}
\end{equation}
Here, we used once again \eqref{sigma1}, \eqref{stoparg}, the
argument for the last bound of \eqref{nMg8new}, together with the
fact that, as proven, $u_1,u\in L^p(\Omega,C[0,T];\mathcal{B})$.
Therefore, for $T=T_M$, $u_n\rightarrow u$ as
$n\rightarrow\infty$ in $L^p(\Omega,C[0,T];\mathcal{B})$. By
uniqueness of the limits, we get that $\hat{u}=u$ a.s.

Since $u$ exists uniquely, then the solution
$(\mathbb{O},\tilde{\eta})$ of \eqref{ob0} exists uniquely. But
it holds that
\begin{equation*}
\begin{split}
\displaystyle{\sup_{\tau\in(0,s)}}
\displaystyle{\sup_{z\in\mathcal{D}}}
\Big{|}\frac{v_{n}^M(z,\tau)}{z}-\frac{v^M(z,\tau)}{z}\Big{|}^p\leq
&c \displaystyle{\sup_{\tau\in(0,s)}}
\displaystyle{\sup_{z\in\mathcal{D}}}
\Big{|}\frac{u_{n}(z,\tau)}{z}-\frac{u(z,\tau)}{z}\Big{|}^p\\
&+c\displaystyle{\sup_{\tau\in(0,s)}}
\displaystyle{\sup_{z\in\mathcal{D}}}
\Big{|}\frac{\mathbb{O}_{n}(z,\tau)}{z}-\frac{\mathbb{O}(z,\tau)}{z}\Big{|}^p\\
\leq &c \displaystyle{\sup_{\tau\in(0,s)}}
\displaystyle{\sup_{z\in\mathcal{D}}}
\Big{|}\frac{u_{n}(z,\tau)}{z}-\frac{u(z,\tau)}{z}\Big{|}^p,
\end{split}
\end{equation*}
where again we used the stability bound in
$\displaystyle{\sup_{\tau\in(0,s)}}
\displaystyle{\sup_{z\in\mathcal{D}}}$ of the obstacle problem
solutions by the obstacle, see in \cite{BH} in the proof of
Theorem 3.2. So, for $T=T_M$, by taking expectation, since, as we
have shown, $u_n\rightarrow u$ in
$L^p(\Omega,C[0,T];\mathcal{B})$, we obtain that $v_n^M\rightarrow
v^M$ as $n\rightarrow\infty$ in $L^p(\Omega,C[0,T];\mathcal{B})$.
By uniqueness of the limits, we get that $\hat{v}^M=v^M$ a.s.

So, for all $T=T_M>0$ such that \eqref{stoparg} holds true, we
derive the following:
\begin{enumerate}
\item $v^M,u$ exist and belong in
$L^p(\Omega,C[0,T];\mathcal{B})$.
\item Since $u$ exists, we may define $\eta^M:=\tilde{\eta}$ a.s. for
$\tilde{\eta}$ the second term of the solution
$(\mathbb{O},\tilde{\eta})$ of \eqref{ob0}.
\item The pair $(v^M,\eta^M)$ exists, $v^M\in L^p(\Omega,C[0,T];\mathcal{B})$, and
is the week solution of the truncated problem \eqref{Mrsde} in
$(0,T)$, where $v^M$, $\eta^M$ satisfy the week formulation
\eqref{nMg1*} for $v^M(y,0):=v_0(y)$, where $\eta^M$ satisfies
\eqref{npsi*} and \eqref{nrefl*}.
\item The pair $(v^M,\eta^M)$ is unique. Indeed, uniqueness of the limit of $v_n^M$ showed that $v^M$ is unique. Uniqueness of
$\eta^M$ follows by the uniqueness of $\tilde{\eta}$ of the
obstacle problem \eqref{ob0}
 since as we have shown $u$ exists uniquely as the limit of $u_n$ in $L^p(\Omega,C[0,T];\mathcal{B})$.
\end{enumerate}
Therefore, there exists a unique solution $(v^M,\eta^N)$ of the
week formulation \eqref{nMg1*} with $v^M\in
L^p(\Omega,C[0,T];\mathcal{B})$ and with $\eta^M$ satisfying
\eqref{npsi*} and \eqref{nrefl*}, which completes the proof.
\end{proof}

\vspace{0.5cm}

We return to the $M$-independent problem \eqref{meqmain}, and we
shall prove that it admits a unique maximal solution by
concatenation of the solution of the $M$-truncated problem
\eqref{Mrsde}-\eqref{npsi*}-\eqref{nrefl*}. This is established by
the next Main Theorem.

\begin{theorem}\label{t2}
Let the noise diffusion $\sigma$ satisfy \eqref{sigma1}, and
$v_0(y)\in L^p(\Omega,C[0,T];\mathcal{B})$ for $p\geq p_0>8$.
Then, there exists a unique weak maximal solution $(v,\eta)$ to
the problem \eqref{meqmain}-\eqref{psi}-\eqref{refl} in the
maximal interval $[0,\displaystyle{\sup_{M>0}}\tau_M)$, where
\begin{equation}\label{rtauM}
\begin{split}
\tau_M:=&\inf\Big{\{}T\geq
0:\;\displaystyle{\sup_{r\in(0,T)}}|\nabla v(0^+,r)|\geq
M\Big{\}}\\
=&\inf\Big{\{}T\geq 0:\;\displaystyle{\sup_{r\in(0,T)}}\nabla
v(0^+,r)\geq M\Big{\}}.
\end{split}
\end{equation}
\end{theorem}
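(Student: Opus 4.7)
The proof proceeds by patching together the $M$-truncated solutions of Theorem \ref{t1}. First, condition \eqref{stoparg} is automatically satisfied on every finite interval $[0,T]$: by construction $\|\mathcal{T}_M(u)\|_{\mathcal{B}}\leq M$, hence $|\nabla(\mathcal{T}_M(v^M))(0^+,r)|\leq M$ a.s., so Theorem \ref{t1} delivers, for every $M>0$ and every $T>0$, a unique pair $(v^M,\eta^M)\in L^p(\Omega,C[0,T];\mathcal{B})$ satisfying \eqref{Mrsde} together with \eqref{npsi*} and \eqref{nrefl*}. Since the operator $\mathcal{T}_M$ of \eqref{optM} acts as the identity on any $u$ with $\|u\|_{\mathcal{B}}\leq M$, on the random interval
\[
[0,\sigma_M),\qquad \sigma_M:=\inf\bigl\{t\geq 0:\,\|v^M(\cdot,t)\|_{\mathcal{B}}\geq M\bigr\},
\]
the truncated equation coincides termwise with the untruncated \eqref{meqmain} and so $(v^M,\eta^M)$ weakly solves the latter there.

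The next step is consistency across levels. Fix $M_1<M_2$. On $[0,\sigma_{M_1})$ one has $\|v^{M_1}(\cdot,t)\|_{\mathcal{B}}<M_1<M_2$, hence $\mathcal{T}_{M_2}(v^{M_1})=v^{M_1}$, and $(v^{M_1},\eta^{M_1})$ therefore also solves the $M_2$-truncated problem with the same initial datum $v_0$. Uniqueness in Theorem \ref{t1} at level $M_2$ forces
\[
v^{M_1}=v^{M_2},\qquad \eta^{M_1}=\eta^{M_2}\quad\text{a.s. on}\quad [0,\sigma_{M_1}),
\]
and in particular $\sigma_{M_1}\leq\sigma_{M_2}$. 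The family $\{\sigma_M\}_{M>0}$ being monotone, I define
\[
v(y,t):=v^M(y,t),\qquad \eta:=\eta^M\quad\text{for}\quad t<\sigma_M,
\]
obtaining an unambiguous weak solution $(v,\eta)$ of \eqref{meqmain}--\eqref{psi}--\eqref{refl} on the random interval $[0,\sup_{M>0}\sigma_M)$; the weak formulation \eqref{d1} and the reflection constraints \eqref{psi}, \eqref{refl} pass over pointwise on each $[0,\sigma_M)$, while the $\mathcal{F}_t$-measurability required in \eqref{psi} survives the consistent patching.

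It remains to identify $\sup_M\sigma_M$ with $\sup_M\tau_M$ for $\tau_M$ as in \eqref{rtauM}. Since $v(0,t)=0$, one has $|\nabla v(0^+,r)|=\lim_{y\to 0^+}v(y,r)/y\leq\|v(\cdot,r)\|_{\mathcal{B}}$, so $\tau_M\geq\sigma_M$ and thus $\sup_M\tau_M\geq\sup_M\sigma_M$. For the converse I would argue that boundedness of $|\nabla v(0^+,\cdot)|$ up to some time $\tau^*<\infty$ implies boundedness of $\|v(\cdot,\cdot)\|_{\mathcal{B}}$ on the same interval, by reading off the integral representation \eqref{g1} and applying the Green-kernel estimates of Propositions 4.3--4.5 of \cite{BH}: these bound the $\mathcal{B}$-norms of the nonlinear convolution and of the stochastic convolution in terms of $\sup_r|\nabla v(0^+,r)|$ and $\|\sigma\|_{\mathcal{B}}$, so a blow-up of $\|v\|_{\mathcal{B}}$ is forced by a blow-up of $|\nabla v(0^+,\cdot)|$. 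Uniqueness of the maximal pair follows: any competing solution $(\tilde v,\tilde\eta)$ coincides with $(v^M,\eta^M)$ on the set where its $\mathcal{B}$-norm stays below $M$, by Theorem \ref{t1}. Maximality of $\sup_M\tau_M$ is automatic, since a finite limit of $|\nabla v(0^+,t)|$ at $t=\sup_M\tau_M$ would allow one to pick $M$ strictly larger, contradicting the definition of the supremum.

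The principal obstacle is the reverse inclusion $\sup_M\sigma_M\geq\sup_M\tau_M$: the stopping time in \eqref{rtauM} monitors only the boundary-point derivative, whereas the truncation threshold in \eqref{optM} is expressed through the global $\mathcal{B}$-norm, and one must show that these two blow-up mechanisms cannot be decoupled. Everything else - uniqueness within each truncation level, consistency across levels, the patching construction and the inheritance of \eqref{psi}, \eqref{refl} - is essentially bookkeeping built directly on Theorem \ref{t1} and on the kernel estimates of \cite{BH} already invoked in its proof.
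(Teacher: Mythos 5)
Your overall strategy --- solve the $M$-truncated problem, establish consistency across truncation levels, concatenate --- is the same as the paper's, but you organize the patching around the $\mathcal{B}$-norm exit times $\sigma_M=\inf\{t:\|v^M(\cdot,t)\|_{\mathcal{B}}\geq M\}$, whereas the paper works directly with the boundary-derivative times $\tau_M$ and proves consistency by re-running the Gronwall estimate of Theorem \ref{t1} up to the random time $\min\{t,\tau\}$ (see \eqref{nMg8*}). Two points in your version need repair. First, your consistency step invokes the uniqueness of Theorem \ref{t1} at level $M_2$ after observing that $v^{M_1}$ solves the $M_2$-truncated problem on $[0,\sigma_{M_1})$; but that uniqueness is stated for solutions on a deterministic interval $[0,T]$, not on a random subinterval, so it cannot be applied verbatim --- you need a localized uniqueness argument (which is exactly what the paper's estimate \eqref{nMg8*} supplies), after which your conclusion does follow.

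Second, and more seriously, the step you yourself flag as ``the principal obstacle'' --- the identification $\sup_M\sigma_M=\sup_M\tau_M$ --- is left as an announcement rather than a proof, and it is precisely the content that upgrades your construction from ``maximal up to blow-up of $\|v(\cdot,t)\|_{\mathcal{B}}$'' to the stated ``maximal up to blow-up of $\nabla v(0^+,\cdot)$''. Your proposed route (pathwise Gronwall on the integral representation \eqref{g1}, using that the nonlinear convolution has $\mathcal{B}$-norm controlled by $\int_0^t|\nabla v(0^+,s)|\,\|v(\cdot,s)\|_{\mathcal{B}}\,ds$, that the stochastic convolution lies a.s. in $C([0,T];\mathcal{B})$, and that the reflection term is dominated by the obstacle) is the right idea and can be closed, but as written the theorem is not yet proved. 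It is worth noting that the paper's own argument leans on the same point implicitly: it asserts that $(v^M,\eta^M)$ solves the untruncated problem up to the boundary-derivative exit time, which requires $\mathcal{T}_M(v^M)=v^M$ there, i.e.\ a bound on the full $\mathcal{B}$-norm and not merely on $\nabla v^M(0^+,\cdot)$; the observation \eqref{rstoparg} controls only the truncated boundary derivative. So the lemma you sketch is genuinely needed --- by both arguments --- and should be written out in full.
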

\begin{proof}

We note that for the reflected problem since $v\geq 0$ a.s. and
$v(0,t)=0$, then $\nabla v(0^+,t)\geq 0$ a.s. for any $t$. As we
mentioned, we continue to keep the absolute value on $\nabla
v(0^+,t)$ in this proof also in order to present a more general
result applicable to the 2d i.b.v. problem of \eqref{stefr1y}, and
to the problem without reflection of next section.

Let $v^M$ as in Theorem \ref{t1}. We observe first that by the
operator definition \eqref{optM}, and since $v^M(0,t)=0$, we have
\begin{equation}\label{rstoparg}
\begin{split}
\displaystyle{\sup_{r\in(0,T)}}&|\nabla(\mathcal{T}_M(v^M))(0^+,r)|^p\leq\min\Big{\{}\displaystyle{\sup_{r\in(0,T)}}|\nabla
v^M(0^+,r)|^p,M^p\Big{\}}\leq M^p,
\end{split}
\end{equation}
and so by \eqref{stoparg}, Theorem \ref{t1} holds also for any
$T=T(M)$ such that
\begin{equation}\label{2stoparg}
\min\Big{\{}\displaystyle{\sup_{r\in(0,T)}}|\nabla
v^M(0^+,r)|,M\Big{\}}\leq M<C_2(M,p)^{1/p}<\infty\;\;a.s.
\end{equation}

We fix $M>0$, and consider arbitrary $\tilde{M}>0$ such that
$\tilde{M}\leq M$. Thus, the weak solution $(v^M,\eta^M)$ of
\eqref{Mrsde}-\eqref{npsi*}-\eqref{nrefl*} solves weakly the
$M$-independent problem \eqref{meqmain}-\eqref{psi}-\eqref{refl}
(since they share the same initial condition), until the (random)
stopping time $\tau$ defined as follows
\begin{equation}\label{rtau}
\begin{split}
\tau:=&\inf\Big{\{}T\geq
0:\;\min\Big{\{}\displaystyle{\sup_{r\in(0,T)}}|\nabla
v^M(0^+,r)|,M\Big{\}}\geq \tilde{M}\Big{\}}\\
=&\inf\Big{\{}T\geq 0:\;\displaystyle{\sup_{r\in(0,T)}}|\nabla
v^M(0^+,r)|\geq \tilde{M}\Big{\}}.
\end{split}
\end{equation}

Let now an arbitrary deterministic $t>0$, then we have as in
\eqref{nMg8}
\begin{equation}\label{nMg8*}
\begin{split}
&cE\Big{(}\displaystyle{\sup_{s\in(0,\min\{t,\tau\})}}\|v^M(\cdot,s)-v^{\tilde{M}}(\cdot,s)\|^p_{\mathcal{B}}\Big{)}
\leq E\Big{(}\displaystyle{\sup_{s\in(0,\min\{t,\tau\})}}\|u(\cdot,s)-\tilde{u}(\cdot,s)\|^p_{\mathcal{B}}\Big{)}\\
&\leq cC(t,p) \int_0^{\min\{t,\tau\}}
E\Big{(}\displaystyle{\sup_{r\in(0,s)}}|\nabla(\mathcal{T}_{\tilde{M}}(v^{\tilde{M}}))(0^+,r)|^p\displaystyle{\sup_{r\in(0,s)}}
\|v^M(\cdot,r)-v^{\tilde{M}}(\cdot,r)\|^p_{\mathcal{B}}\Big{)}ds\\
&\leq cC(t,p) \int_0^{\min\{t,\tau\}}
E\Big{(}\displaystyle{\sup_{r\in(0,s)}}|\nabla(\mathcal{T}_{\tilde{M}}(v^{\tilde{M}}))(0^+,r)
|^p\displaystyle{\sup_{r\in(0,s)}}
\|u(\cdot,r)-\tilde{u}(\cdot,r)\|^p_{\mathcal{B}}\Big{)}ds\\
&\leq C(t,p)\tilde{M}^p \int_0^{\min\{t,\tau\}}
E\Big{(}\displaystyle{\sup_{r\in(0,s)}}
\|u(\cdot,r)-\tilde{u}(\cdot,r)\|^p_{\mathcal{B}}\Big{)}ds,
\end{split}
\end{equation}
for $\tilde{u}$ given by
\begin{equation}\label{nMg3u*}
\begin{split}
\tilde{u}(y,t)=&\int_{\mathcal{D}}v_0(z)G(y,z,t)dz\\
&+\int_0^t\int_{\mathcal{D}}\nabla (\mathcal{T}_{\tilde{M}}(v^{\tilde{M}}))(0^+,s)\nabla G(y,z,t-s) \mathcal{T}_{\tilde{M}}(v^{\tilde{M}})(z,s)dzds\\
&+\int_0^t\int_{\mathcal{D}} G(y,z,t-s)\sigma(z)W(dz, ds),
\end{split}
\end{equation}
while
\begin{equation*}
\begin{split}
&cC(t,p) \int_0^{\min\{t,\tau\}}
E\Big{(}\displaystyle{\sup_{r\in(0,s)}}|\nabla(\mathcal{T}_{\tilde{M}}(v^{\tilde{M}}))(0^+,r)|^p\displaystyle{\sup_{r\in(0,s)}}
\|v^M(\cdot,r)-v^{\tilde{M}}(\cdot,r)\|^p_{\mathcal{B}}\Big{)}ds\\
&\leq C(t,p)\tilde{M}^p \int_0^{\min\{t,\tau\}}
E\Big{(}\displaystyle{\sup_{r\in(0,s)}}
\|v^M(\cdot,r)-v^{\tilde{M}}(\cdot,r)\|^p_{\mathcal{B}}\Big{)}ds.
\end{split}
\end{equation*}
 Therefore, by Gronwall's inequality, we get
$$u(\cdot,s)=\tilde{u}(\cdot,s),\;\;\;\;v^M(\cdot,s)=v^{\tilde{M}}(\cdot,s)\;\;\;\forall\;s\leq\min\{t,\tau\},$$
and thus, by the uniqueness of the obstacle problem solution
$\eta^{\tilde{M}}$ when $\tilde{u}$ exists, we arrive at
$$v^M(\cdot,s)=v^{\tilde{M}}(\cdot,s),\;\;\;\eta^M(\cdot,s)=\eta^{\tilde{M}}(\cdot,s)
\;\;\;\forall\;s\leq\min\{t,\tau\}.$$ Since $t$ is a
deterministic arbitrary constant, the above yields that
$$v^M(\cdot,s)=v^{\tilde{M}}(\cdot,s),\;\;\;\eta^M(\cdot,s)=\eta^{\tilde{M}}(\cdot,s)\;\;\forall\;s\leq\tau\;\;a.s.$$
So, the weak solutions of the $M$-truncated problem
\eqref{Mrsde}-\eqref{npsi*}-\eqref{nrefl*} are consistent, and we
can proceed to concatenation.

Let us define the stochastic process $(v,\eta)$ such that for all
$M>0$ it coincides with the weak solution $(v^M,\eta^M)$ of the
$M$-truncated problem \eqref{Mrsde}-\eqref{npsi*}-\eqref{nrefl*}
until the stopping time
\begin{equation*}
\begin{split}
\tau_M=&\inf\Big{\{}T\geq
0:\;\displaystyle{\sup_{r\in(0,T)}}|\nabla v^M(0^+,r)|\geq M\Big{\}}\\
=&\inf\Big{\{}T\geq 0:\;\displaystyle{\sup_{r\in(0,T)}}|\nabla
v(0^+,r)|\geq M\Big{\}}.
\end{split}
\end{equation*}
By its definition, $(v(\cdot,s),\eta(\cdot,s))$ is a weak solution
of the $M$-independent problem
\eqref{meqmain}-\eqref{psi}-\eqref{refl}, for any
$s\in[0,\displaystyle{\sup_{M>0}}\tau_M)$, and $\tau_M$ is a
localising sequence. Then, $(v(\cdot,s),\eta(\cdot,s))$ is a
maximal weak solution of \eqref{meqmain}-\eqref{psi}-\eqref{refl},
since
$$\displaystyle{\lim_{t\rightarrow
\Big{(}\displaystyle{\sup_{M>0}}\tau_M\Big{)}^-}}\displaystyle{\sup_{r\in(0,t)}}|\nabla
v^M(0^+,r)|=\infty\;\;a.s.$$ Uniqueness of the maximal weak
solution $(v(\cdot,s),\eta(\cdot,s))$ for
$s\in[0,\displaystyle{\sup_{M>0}}\tau_M)$, follows from the
consistency of the solution of the $M$-truncated problem with
which by its definition coincides.
\end{proof}

Let us now consider Case 1. The above analysis is valid for both
i.b.v. problems of \eqref{stefr1y}, and due to Theorem \ref{t2},
and under its assumptions there exist unique weak maximal
solutions $(v_1,\eta_1)$, $(v_2,\eta_2)$ satisfying
\eqref{psi}-\eqref{refl} in the maximal interval
$[0,\displaystyle{\sup_{M>0}}\tau_{1M})$, where
\begin{equation}\label{rtauM1}
\begin{split}
\tau_{1M}:=&\inf\Big{\{}T\geq
0:\;\displaystyle{\sup_{r\in(0,T)}}(|\nabla v_1(0^+,r)|+|\nabla
v_2(0^+,r)|)\geq
M\Big{\}}\\
=&\inf\Big{\{}T\geq 0:\;\displaystyle{\sup_{r\in(0,T)}}(\nabla
v_1(0^+,r)+\nabla v_2(0^+,r))\geq M\Big{\}}.
\end{split}
\end{equation}
Recall that $\Omega=(a,b)$, $\lambda=b-a$, and $a\leq s^-(0)\leq
s^+(0)\leq b$. We need $a\leq s^-(t)\leq s^+(t)\leq b$ in order to
return to the initial variables. This will restrict the stopping
time. By using the Stefan condition \eqref{stefcond1} we obtain
$$\partial_ts^-(t)=-\nabla v_2(0^+,t)\leq 0,\;\;\partial_t
s^+(t)=-\nabla v_1(0^+,t)\leq 0,$$ and so
$$s^-(t)\leq s^-(0)\leq b,\;\;s^+(t)\leq s^+(0)\leq b,$$
so we need $a\leq s^-(t)\leq s^+(t)$ which yields
$$a\leq s^-(0)-\int_0^t\nabla v_2(0^+,s)ds\leq s^+(0)-\int_0^t\nabla
v_1(0^+,s)ds.$$ We define the stopping time
\begin{equation}\label{psp1}
\tau_{1s}:=\inf\Big{\{}T>0:\;\sup_{r\in(0,T)}|\nabla
v_1(0^+,r)-\nabla v_2(0^+,r)|\geq
(T)^{-1}(s^{+}(0)-s^{-}(0))\Big{\}},
\end{equation}
to keep the spread non-negative and
\begin{equation}\label{nha1}
\tau_{1}^*:=\inf\Big{\{}T>0:\;\sup_{r\in(0,T)}\nabla
v_2(0^+,r)\geq (T)^{-1}(s^{-}(0)-a)\Big{\}},
\end{equation}
to keep the spread area in $\mathcal{D}$.

So, the next theorem holds.
\begin{theorem}\label{t31}
Under the assumptions of Theorem \ref{t2}, and if the initial
spread satisfies $\lambda> s^+(0)-s^-(0)\geq 0$, then there exist
unique weak maximal solutions $(w_1,\eta_1)$, $(w_2,\eta_2)$ to
the reflected Stefan problem
\eqref{stefr}-\eqref{psi}-\eqref{refl}, and $w|_{x\geq s^+}=w_1$,
$w|_{x\leq s^-}=-w_2$, in the maximal interval
$\mathcal{I}_1:=[0,\hat{\tau})$ for $\hat{\tau}:=
\min\{\displaystyle{\sup_{M>0}}\tau_{1M},\tau_{1s},\tau_1^*\}$,
with $\tau_{1M},\tau_{1s},\tau_1^*$ given by \eqref{rtauM1},
\eqref{psp1}, \eqref{nha1} for which the spread $s^{+}(t)-s^-(t)$
defined by the Stefan condition \eqref{stefcond1} exists and stays
a.s. non-negative for any $t\in\mathcal{I}_1$.

\end{theorem}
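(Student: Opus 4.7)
The plan is to apply Theorem \ref{t2} separately to each of the two independent Dirichlet problems in \eqref{stefr1y} on the fixed domain $\mathcal{D}=(0,\lambda)$, thereby producing unique maximal weak solutions $(v_1,\eta_1)$ and $(v_2,\eta_2)$ on the random interval $[0,\sup_{M>0}\tau_{1M})$ where $\tau_{1M}$ is defined in \eqref{rtauM1}. Since the two spdes in \eqref{stefr1y} are genuinely decoupled (each equation only involves its own unknown and the noise), Theorem \ref{t2} applies to each one after a trivial relabeling and the resulting joint stopping time is precisely $\tau_{1M}$ as written. Note that in the reflected case $v_i\geq 0$ and $v_i(0,t)=0$ imply $\nabla v_i(0^+,t)\geq 0$ a.s., so the absolute values in \eqref{rtauM1} can be dropped.

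Next I would define $s^+(t)$ and $s^-(t)$ on $[0,\sup_{M>0}\tau_{1M})$ by integrating the Stefan condition \eqref{stefcond1}:
\begin{equation*}
s^+(t)=s^+(0)-\int_0^t\nabla v_1(0^+,r)\,dr,\qquad s^-(t)=s^-(0)-\int_0^t\nabla v_2(0^+,r)\,dr.
\end{equation*}
These integrals are well-defined because, for $t<\tau_{1M}$, $\sup_{r\in(0,t)}(\nabla v_1(0^+,r)+\nabla v_2(0^+,r))<M$, and the continuity of $r\mapsto\nabla v_i(0^+,r)$ inherited from $v_i\in L^p(\Omega,C[0,T];\mathcal{B})$ yields absolute continuity of $s^\pm$. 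Since $\nabla v_i(0^+,\cdot)\geq 0$, both $s^+$ and $s^-$ are non-increasing, so $s^+(t)\leq s^+(0)\leq b$ automatically. To control the remaining geometric constraints I would invoke the stopping times \eqref{psp1} and \eqref{nha1}: for $t<\tau_{1s}$ the elementary estimate $\int_0^t|\nabla v_1(0^+,r)-\nabla v_2(0^+,r)|\,dr\leq t\sup_{r\in(0,t)}|\nabla v_1(0^+,r)-\nabla v_2(0^+,r)|<s^+(0)-s^-(0)$ gives $s^+(t)-s^-(t)>0$; and for $t<\tau_1^*$ the same estimate applied to $\nabla v_2$ alone yields $s^-(t)>a$. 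Hence on $\mathcal{I}_1=[0,\hat\tau)$ one has $a<s^-(t)\leq s^+(t)\leq b$, and the assumption $\lambda>s^+(0)-s^-(0)\geq 0$ ensures $\hat\tau>0$ a.s., since all three stopping times are strictly positive (the $\nabla v_i(0^+,\cdot)$ are a.s.\ finite on neighborhoods of $0$ while the right-hand sides of \eqref{psp1}, \eqref{nha1} blow up as $T\downarrow 0$).

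With the moving boundaries constructed, I would invert the change of variables \eqref{cv1gen} and set
\begin{equation*}
w(x,t):=\begin{cases}v_1(x-s^+(t),t),&x\geq s^+(t),\\ -v_2(-x+s^-(t),t),&x\leq s^-(t),\\ 0,&x\in(s^-(t),s^+(t)),\end{cases}
\end{equation*}
defining $w_1:=w|_{x\geq s^+}=v_1(x-s^+(t),t)$ and $w_2:=-w|_{x\leq s^-}=v_2(-x+s^-(t),t)$, together with the push-forward reflection measures $\eta_1(dx,dt):=\eta_1^{(v)}(x-s^+(t),dt)$ and $\eta_2(dx,dt):=\eta_2^{(v)}(-x+s^-(t),dt)$ (in the obvious distributional sense). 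By the chain rule identities \eqref{eqs1}, \eqref{eqs2} and the fact that $\partial_t s^\pm$ are exactly the coefficients $\mp\nabla v_{1,2}(0^+,t)$ appearing in \eqref{stefr1y}, the spdes \eqref{stefr1y} transform back into \eqref{stefr} on $\Omega_{\mathrm{Liq}}(t)$ for each $t\in\mathcal{I}_1$; the Dirichlet conditions $v_i(0,t)=0$ match the vanishing of $w$ on $\partial S(t)$, and the constraints \eqref{psi}, \eqref{refl} transfer verbatim since the change of variables is a diffeomorphism at fixed $t$. Uniqueness of $(w_1,w_2,\eta_1,\eta_2)$ is then inherited from uniqueness of $(v_1,v_2,\eta_1^{(v)},\eta_2^{(v)})$ provided by Theorem \ref{t2}, and maximality of $\mathcal{I}_1$ follows from the fact that at $\hat\tau$ either $\tau_{1M}$ is reached (so $|\nabla v_1(0^+,\cdot)|+|\nabla v_2(0^+,\cdot)|$ cannot be extended as a bounded process), or one of the geometric constraints defining $\tau_{1s}$, $\tau_1^*$ is saturated, in any case preventing further extension of the Stefan problem in the original coordinates.

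The main obstacle in carrying out this plan rigorously is ensuring that the stochastic integral against $W$ in \eqref{stefr1y} corresponds correctly under the change of variables to the noise $\dot{W}_s$ in \eqref{stefr}: this is a measurability and adaptedness question, since $s^\pm(t)$ are themselves random functions depending on $(v_1,v_2)$. The key observation I would exploit is that \eqref{noi1} defines $\dot W_s$ by a pathwise translation in $x$ by the $\mathcal{F}_t$-adapted quantities $s^\pm(t)$, and since the white noise $W(y,t)$ is translation-invariant in law while the translation $y\mapsto x-s^+(t)$ is $\mathcal{F}_t$-measurable for each $t$, the Walsh-type stochastic integral against $W$ on $\mathcal{D}$ appearing in the integral representation \eqref{g1} for $v_i$ corresponds precisely to the integral against $\dot W_s$ in the moving-frame weak formulation for $w_i$. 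Writing out the weak formulation for $w_i$ against a test function $\phi(x,t)$ compactly supported in $\Omega_{\mathrm{Liq}}(t)$, changing variables to $\tilde\phi(y,t):=\phi(y+s^+(t),t)$ (or the symmetric one for $w_2$), and using the computations \eqref{eqs1}, \eqref{eqs2} combined with the Stefan identities will produce the weak formulation \eqref{d1} for $v_i$, closing the argument.
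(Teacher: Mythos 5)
Your proposal is correct and follows essentially the same route as the paper, which obtains Theorem \ref{t31} directly from the discussion preceding it: apply Theorem \ref{t2} to each of the two decoupled problems in \eqref{stefr1y}, integrate the Stefan condition \eqref{stefcond1} to recover $s^{\pm}(t)$, use the monotonicity $\nabla v_i(0^+,t)\geq 0$ to get $s^{\pm}(t)\leq s^{\pm}(0)\leq b$, and impose the stopping times \eqref{psp1}, \eqref{nha1} to keep $a< s^-(t)\leq s^+(t)$ before inverting the change of variables \eqref{cv1gen}. Your write-up is in fact more detailed than the paper's (notably on the adaptedness of the translated noise and on the strict positivity of $\hat{\tau}$, the latter implicitly requiring $s^+(0)-s^-(0)>0$ and $s^-(0)>a$), but the underlying argument is the same.
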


We consider now Case 2. Due to Theorem \ref{t2}, and under its
assumptions there exist unique weak maximal solutions
$(v_1,\eta_1)$, $(v_2,\eta_2)$ satisfying
\eqref{psi}-\eqref{refl} in the maximal interval
$[0,\displaystyle{\sup_{M>0}}\tau_{1M})$ for $\tau_{1M}$ given by
\eqref{rtauM1}. We need $a\leq s^-(t)\leq s^+(t)\leq b$ in order
to return to the initial variables. By using the Stefan condition
\eqref{stefcond2} we obtain
$$\partial_ts^-(t)=\nabla v_2(0^+,t)\geq 0,\;\;\partial_t
s^+(t)=-\nabla v_1(0^+,t)\leq 0,$$ and so
$$a\leq s^-(0)\leq s^-(t),\;\;s^+(t)\leq s^+(0)\leq b,$$
so we need $s^-(t)\leq s^+(t)$ which yields
$$s^-(0)+\int_0^t\nabla v_2(0^+,s)ds\leq s^+(0)-\int_0^t\nabla
v_1(0^+,s)ds.$$ We define the stopping time
\begin{equation}\label{psp2}
\tau_{2s}:=\inf\Big{\{}T>0:\;\sup_{r\in(0,T)}(\nabla
v_1(0^+,r)+\nabla v_2(0^+,r))\geq
(T)^{-1}(s^{+}(0)-s^{-}(0))\Big{\}},
\end{equation}
to keep the spread non-negative, while the spread area stays in
$\mathcal{D}$ as the spread is decreasing.

So, the next theorem holds.
\begin{theorem}\label{t32}
Under the assumptions of Theorem \ref{t2}, and if the initial
spread satisfies $\lambda> s^+(0)-s^-(0)\geq 0$, then there exist
unique weak maximal solutions $(w_1,\eta_1)$, $(w_2,\eta_2)$ to
the reflected Stefan problem
\eqref{stefrr}-\eqref{psi}-\eqref{refl}, and $w|_{x\geq
s^+}=w_1$, $w|_{x\leq s^-}=w_2$, in the maximal interval
$\mathcal{I}_2:=[0,\hat{\tau})$ for $\hat{\tau}:=
\min\{\displaystyle{\sup_{M>0}}\tau_{1M},\tau_{2s}\}$, with
$\tau_{1M},\tau_{2s}$ given by \eqref{rtauM1}, \eqref{psp2},
 for which the spread $s^{+}(t)-s^-(t)$ defined by
the Stefan condition \eqref{stefcond2} exists and stays a.s.
non-negative for any $t\in\mathcal{I}_2$.

\end{theorem}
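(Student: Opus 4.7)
The plan is to apply Theorem \ref{t2} independently to each of the two transformed i.b.v.\ problems in \eqref{stefr2y}, obtaining unique maximal weak solutions $(v_1,\eta_1)$ and $(v_2,\eta_2)$ of the Dirichlet problem on $\mathcal{D}$ on the random interval $[0,\sup_{M>0}\tau_{1M})$, with $\tau_{1M}$ defined by \eqref{rtauM1}. Since the two equations for $v_1,v_2$ are driven by the same (independent of the unknowns) noise and are coupled only through their own gradients at $y=0^+$, the combined system is solved simply by taking the intersection of the two maximal intervals, which is already encoded in the definition \eqref{rtauM1}. The reflection measures $\eta_1,\eta_2$ keep $v_1,v_2\geq 0$ a.s.\ on $[0,\sup_{M>0}\tau_{1M})$ by construction, and each $v_i$ satisfies the weak integral representation \eqref{nMg1*} with $\sigma\dot W$ as the noise term.

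Next, I would exploit the monotonicity that is specific to Case 2. Because $v_1,v_2\geq 0$ a.s.\ and $v_1(0,t)=v_2(0,t)=0$, the one-sided derivatives $\nabla v_1(0^+,t)$ and $\nabla v_2(0^+,t)$ are both a.s.\ non-negative. From the Stefan condition \eqref{stefcond2} this yields
\begin{equation*}
\partial_t s^+(t) = -\nabla v_1(0^+,t)\leq 0,\qquad \partial_t s^-(t) = \nabla v_2(0^+,t)\geq 0,
\end{equation*}
so that $s^-(0)\leq s^-(t)$ and $s^+(t)\leq s^+(0)$ for every $t$ in the existence interval. In particular, given the hypothesis $a\leq s^-(0)\leq s^+(0)\leq b$, one automatically has $a\leq s^-(t)$ and $s^+(t)\leq b$; no extra stopping time is required to keep the spread area inside $\Omega$, in contrast to Case 1 where \eqref{nha1} was needed. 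This is the key simplification relative to Theorem \ref{t31}.

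The only restriction still to be imposed is the non-negativity of the spread $s^+(t)-s^-(t)$. Integrating \eqref{spdyn22} gives
\begin{equation*}
s^+(t)-s^-(t)=\bigl(s^+(0)-s^-(0)\bigr) - \int_0^t\bigl(\nabla v_1(0^+,r)+\nabla v_2(0^+,r)\bigr)\,dr,
\end{equation*}
which is bounded below by $(s^+(0)-s^-(0))-t\sup_{r\in(0,t)}(\nabla v_1(0^+,r)+\nabla v_2(0^+,r))$. Requiring this to remain $\geq 0$ is precisely the definition \eqref{psp2} of $\tau_{2s}$, so on $[0,\tau_{2s})$ the spread stays a.s.\ non-negative. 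Setting $\hat\tau:=\min\{\sup_{M>0}\tau_{1M},\tau_{2s}\}$, the interval $\mathcal{I}_2=[0,\hat\tau)$ is the largest on which both $v_1,v_2$ exist in the sense of Theorem \ref{t2} and the geometric constraint $a\leq s^-(t)\leq s^+(t)\leq b$ is satisfied a.s.

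Finally, I would invert the change of variables \eqref{cv1gen} to recover the moving-boundary objects: define $w_1(x,t):=v_1(x-s^+(t),t)$ for $x\geq s^+(t)$, $w_2(x,t):=v_2(-x+s^-(t),t)$ for $x\leq s^-(t)$, and $\eta_1,\eta_2$ as the pushforwards of the reflection measures under the same transformations. Using \eqref{eqs1}--\eqref{eqs2} together with the Stefan condition \eqref{stefcond2}, one checks that $(w_1,\eta_1),(w_2,\eta_2)$ solve \eqref{stefrr} weakly on $\mathcal{I}_2$, satisfy \eqref{psi}--\eqref{refl} in the original variables, and obey $w|_{x\geq s^+}=w_1$, $w|_{x\leq s^-}=w_2$. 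Uniqueness of $(w_1,\eta_1)$ and $(w_2,\eta_2)$ is inherited directly from the uniqueness of $(v_1,\eta_1),(v_2,\eta_2)$ granted by Theorem \ref{t2}, together with the deterministic (pointwise in $\omega$) bijection provided by \eqref{cv1gen} once $s^\pm$ are fixed through integration of the Stefan condition. The main (modest) obstacle is simply the bookkeeping at the moving boundary: verifying that the pushforward of the reflection measure retains the $\mathcal{F}_t$-measurability \eqref{psi} and the orthogonality \eqref{refl} after the random time-dependent change of variables; this follows because $s^\pm(t)$ are $\mathcal{F}_t$-adapted, being integrals of $\mathcal{F}_t$-adapted traces of $\nabla v_{1,2}$.
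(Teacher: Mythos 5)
Your proposal is correct and follows essentially the same route as the paper: invoke Theorem \ref{t2} for the two decoupled problems in \eqref{stefr2y}, use the non-negativity of $v_{1,2}$ and the Dirichlet condition at $y=0$ to get $\partial_t s^+\leq 0$, $\partial_t s^-\geq 0$ (so the spread area stays in $\Omega$ automatically and only $\tau_{2s}$ is needed to keep $s^+-s^-\geq 0$), then invert \eqref{cv1gen}. Your added bookkeeping on the pushforward of the reflection measures and the $\mathcal{F}_t$-adaptedness of $s^{\pm}$ is left implicit in the paper but is consistent with it.
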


\section{The problem without reflection}
\subsection{Existence of maximal solutions}
We shall consider the unreflected initial and boundary value
problem for
\begin{equation}\label{meqmain3}
v_t(y,t)=\alpha \Delta v(y,t)-\nabla v(0^+,t)\nabla
v(y,t)+\sigma(y)\dot{W}(y,t),
\end{equation}
posed for any $y$ in $\mathcal{D}=(0,\lambda)$ for $t\in[0,T]$
with Dirichlet b.c., with $v(y,0)$ given.

In the proofs of the previous section we replace the reflection
measure by $0$ and keep as presented the absolute value on the
changing in general sign $\nabla v(0^+,t)$ (as $v$ may take
negative values), and we derive the next results.

\begin{theorem}\label{t1d}
Let the noise diffusion $\sigma$ satisfy the condition
\eqref{sigma1}, $M>0$ fixed, $p\geq p_0>8$,
%for $\nabla v(0^+,t)$ given by the solution $v$ of the
%M-independent Stefan problem \eqref{rsde}, and
and let $v_0(y)\in L^p(\Omega,C[0,T];\mathcal{B})$ be the initial
condition of \eqref{meqmain3}. Then there exists a unique week
solution $v^M\in L^p(\Omega,C[0,T];\mathcal{B})$ to the truncated
problem
\begin{equation}\label{Mrsded}
\begin{split}
&v^M_t(y,t)=\alpha\Delta v^M(y,t)-\nabla
(\mathcal{T}_M(v^M))(0^+,t)\nabla
(\mathcal{T}_M(v^M))(y,t)\\
&\hspace{2.5cm}+\sigma(y)\dot{W}(y,t),\;\;t\in(0,T],\;\;y\in\mathcal{D},\\
&v^M(y,0):=v_0(y),\;\;y\in\mathcal{D},\\
&v^M(0,t)=v^M(\lambda,t)=0,\;\;t\in(0,T],\\
%&\nabla (\mathcal{T}_M(v^M))(0,t):=\nabla
%(\mathcal{T}_M(v))(0,t),\;\;t\in(0,T],
\end{split}
\end{equation}
where $T:=T_M>0$ such that
\begin{equation}\label{stoparg}
\displaystyle{\sup_{r\in(0,T)}}|\nabla(\mathcal{T}_M(v^M))(0^+,r)|^p<\infty\;\;a.s.,
\end{equation}
where for any $t\in(0,T)$, $v^M$ satisfies the week formulation
\begin{equation}\label{nMg1*d}
\begin{split}
v^M(y,t)=&\int_{\mathcal{D}}v_0(z)G(y,z,t)dz\\
&+\int_0^t\int_{\mathcal{D}}\nabla (\mathcal{T}_M(v^M))(0^+,s)\nabla G(y,z,t-s) \mathcal{T}_M(v^M)(z,s)dzds\\
&+\int_0^t\int_{\mathcal{D}} G(y,z,t-s)\sigma(z)W(dz, ds),
\end{split}
\end{equation}
for $v^M(y,0):=v_0(y)$.
\end{theorem}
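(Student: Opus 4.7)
The plan is to mirror the proof of Theorem \ref{t1}, but with the reflection measure and the associated obstacle decomposition removed. Since there is no reflection constraint, one works directly with the candidate integral representation \eqref{nMg1*d} rather than splitting $v_n^M = u_n + \mathbb{O}_n$. The absolute value on $\nabla v(0^+, s)$ is kept throughout because, in the unreflected case, $v$ is signed and so is its gradient trace at $0^+$.

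First, I would set up a Picard iteration
\begin{equation*}
\begin{split}
v_n^M(y,t) =& \int_{\mathcal{D}} v_0(z) G(y,z,t)\, dz \\
&+ \int_0^t\!\int_{\mathcal{D}} \nabla(\mathcal{T}_M(v_{n-1}^M))(0^+,s)\, \nabla G(y,z,t-s)\, \mathcal{T}_M(v_{n-1}^M)(z,s)\, dz\, ds \\
&+ \int_0^t\!\int_{\mathcal{D}} G(y,z,t-s)\, \sigma(z)\, W(dz, ds),
\end{split}
\end{equation*}
with $v_0^M(y,t) := v_0(y)$. The key observation is that, since $\mathcal{T}_M$ is a well-defined operator on $\mathcal{B}$ (see \cite{BH}) and since $\mathcal{T}_M(u)(0) = 0$, the gradient trace $\nabla(\mathcal{T}_M(v_{n-1}^M))(0^+,\cdot)$ exists and is a.s.\ bounded by $M$ pointwise in time, as in \eqref{rstoparg}. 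This supplies the crucial deterministic control that drives the contraction argument.

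Second, I would apply the norm $\|\cdot\|_{\mathcal{B}}$ to the differences $v_n^M - v_{n-1}^M$, take $p$-th powers, take $\sup_{t\in(0,T)}$, and then expectation. Using the estimate of Proposition 4.4 of \cite{BH} to handle the drift-gradient convolution and the Lipschitz-type bound $|\min\{a,M\} - \min\{b,M\}| \leq |a-b|$, combined with the a.s.\ bound of the truncated gradient by $M$, this yields
\begin{equation*}
E\left(\sup_{t\in(0,T)} \|v_n^M(\cdot,t) - v_{n-1}^M(\cdot,t)\|_{\mathcal{B}}^p\right) \leq C(T,p,M) \int_0^T E\left(\sup_{\tau \in (0,s)} \|v_{n-1}^M - v_{n-2}^M\|_{\mathcal{B}}^p\right) ds.
\end{equation*}
Iterating and noting $C^{n-1}/(n-1)! \to 0$ produces the Cauchy property in $L^p(\Omega, C[0,T]; \mathcal{B})$, provided the initial iterate $v_1^M$ lies in this space. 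The latter follows by bounding $v_1^M$ with Proposition 4.3 of \cite{BH} for the initial-condition term, Proposition 4.4 for the drift (using again that the truncated gradient is a.s.\ bounded by $M$), and Proposition 4.5 of \cite{BH} for the stochastic integral, where the hypothesis $p > 8$ and the regularity assumption \eqref{sigma1} ensure $\|\sigma\|_{\mathcal{B}}$ is finite and the noise term has finite $p$-th moment in $\mathcal{B}$.

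Third, completeness of $L^p(\Omega, C[0,T]; \mathcal{B})$ delivers a limit $v^M$, and passing to the limit in the Picard recursion (using again the Lipschitz bound on $\mathcal{T}_M$ and the a.s.\ pointwise control of the truncated gradient) shows $v^M$ satisfies the weak formulation \eqref{nMg1*d}. Uniqueness is obtained by writing the analogous estimate for the difference of two solutions and invoking Gronwall. The simplification over Theorem \ref{t1} is substantial: since there is no $\eta^M$, one does not need to define, solve, or verify uniqueness for the auxiliary obstacle problem \eqref{ob0}, nor invoke its stability bound to compare $v_n^M$ with $u_n$. The main technical ingredient, which is the same as in Theorem \ref{t1}, is the careful use of the Banach space $\mathcal{B}$ together with the Green's function estimates of \cite{BH}; the anticipated sticking point is merely bookkeeping to confirm that, when $v^M$ is signed, all absolute values are correctly preserved so that the contraction estimate runs through without ever invoking non-negativity.
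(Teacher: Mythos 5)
Your overall architecture matches what the paper intends for Theorem \ref{t1d}: the paper's own ``proof'' is simply the proof of Theorem \ref{t1} with the reflection measure set to zero, so dropping the obstacle decomposition $v_n^M=u_n+\mathbb{O}_n$ and running the Picard/contraction argument in $L^p(\Omega,C[0,T];\mathcal{B})$ with the Green's function estimates of \cite{BH} is exactly right. Note, however, one structural difference: the paper's iteration \eqref{nMg1} freezes the gradient coefficient at the limit object, $\nabla(\mathcal{T}_M(v^M))(0^+,s)$, and iterates only the factor $\mathcal{T}_M(v_{n-1}^M)(z,s)$, so its contraction needs only the Lipschitz bound $|\min\{a,M\}-\min\{b,M\}|\le|a-b|$ together with the a.s.\ deterministic bound on the frozen coefficient supplied by the hypothesis \eqref{stoparg}. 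Your scheme iterates both factors, $\nabla(\mathcal{T}_M(v_{n-1}^M))(0^+,s)\,\mathcal{T}_M(v_{n-1}^M)(z,s)$, which is the more standard self-contained fixed-point formulation, but it forces you to estimate a difference of products and hence requires a deterministic bound on each factor separately.

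This is where the genuine gap lies. You assert that $\nabla(\mathcal{T}_M(v_{n-1}^M))(0^+,\cdot)$ is a.s.\ bounded by $M$ ``as in \eqref{rstoparg}'', and you identify this as the crucial control driving the contraction. But the operator \eqref{optM} truncates only from above: for $v\in\mathcal{B}$ one has $\nabla(\mathcal{T}_M(v))(0^+)=\min\{\nabla v(0^+),M\}$, so $|\nabla(\mathcal{T}_M(v))(0^+)|$ can be arbitrarily large when $\nabla v(0^+)$ is very negative; likewise $|\mathcal{T}_M(v)(z)/z|=|\min\{v(z)/z,M\}|$ is not bounded by $M$ for signed $v$. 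The bound \eqref{rstoparg} holds precisely because in the reflected case $v^M\ge 0$ and $v^M(0,\cdot)=0$ force $\nabla v^M(0^+,\cdot)\ge 0$ --- and Theorem \ref{t1d} concerns exactly the signed, unreflected case where this fails. Consequently your displayed contraction inequality with constant $C(T,p,M)$ does not follow: the Lipschitz constant of the product picks up $\sup_{z}|\min\{v_{n-1}^M(z,\cdot)/z,M\}|$, a random quantity of size comparable to $\|v_{n-1}^M\|_{\mathcal{B}}$ that cannot be pulled out of the expectation as a deterministic constant. To close the argument you must either (i) adopt the paper's device of freezing the coefficient at $\nabla(\mathcal{T}_M(v^M))(0^+,s)$ and invoking \eqref{stoparg} as a hypothesis on $T_M$, so that the coefficient is a.s.\ bounded by a deterministic constant and only the Lipschitz property of $\min\{\cdot,M\}$ is needed for the iterated factor, or (ii) replace $\mathcal{T}_M$ by a genuinely two-sided truncation $y\max\{-M,\min\{u(y)/y,M\}\}$ for the signed problem. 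This is not ``merely bookkeeping'' of absolute values, as you anticipate at the end of your proposal; it is the step on which your contraction hinges.
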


\begin{theorem}\label{t2d}
Let the noise diffusion $\sigma$ satisfy \eqref{sigma1}, and
$v_0(y)\in L^p(\Omega,C[0,T];\mathcal{B})$ for $p\geq p_0>8$.
Then, there exists a unique weak maximal solution $v$ to the
problem \eqref{meqmain3} in the maximal interval
$[0,\displaystyle{\sup_{M>0}}\tilde{\tau}_{M})$, where
\begin{equation}\label{rtauMd}\tilde{\tau}_{M}:=\inf\Big{\{}T\geq
0:\;\displaystyle{\sup_{r\in(0,T)}}|\nabla v(0^+,r)|\geq
M\Big{\}}.
\end{equation}
\end{theorem}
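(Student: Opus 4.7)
The plan is to repeat the concatenation argument of Theorem \ref{t2}, but simplified by the absence of the reflection measure, so that $v^M$ plays directly the role that $u$ played there.

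First, I would invoke Theorem \ref{t1d} to obtain, for each fixed $M>0$, a unique weak solution $v^M \in L^p(\Omega,C[0,T];\mathcal{B})$ of the $M$-truncated problem \eqref{Mrsded} on an arbitrary deterministic interval $[0,T]$. The crucial observation is that, by the definition of $\mathcal{T}_M$ and the fact that $v^M(0,t)=0$, we have the deterministic bound
\begin{equation*}
\sup_{r\in(0,T)}|\nabla(\mathcal{T}_M(v^M))(0^+,r)|^p \leq M^p < \infty\;\;\text{a.s.},
\end{equation*}
so condition \eqref{stoparg} is automatic and the truncated problem is globally solvable.

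Next I would establish consistency of the family $\{v^M\}_{M>0}$. Fix $M>0$ and take $\tilde{M}\leq M$. Since no obstacle correction is present, the difference $v^M - v^{\tilde{M}}$ satisfies a mild formula whose nonlinear drift involves only $\nabla(\mathcal{T}_M(v^M))$, $\nabla(\mathcal{T}_{\tilde{M}}(v^{\tilde{M}}))$, and the heat kernel $G$, while the noise term cancels identically (same initial data, same $\sigma$, same $W$). Applying the Proposition 4.4 estimate of \cite{BH} in $\|\cdot\|_{\mathcal{B}}$, mimicking the derivation of \eqref{nMg8*} and using the Lipschitz property $|\min\{a,\tilde{M}\}-\min\{b,\tilde{M}\}|\leq|a-b|$, I obtain
\begin{equation*}
E\Bigl(\sup_{s\in(0,\min\{t,\tau\})}\|v^M(\cdot,s)-v^{\tilde{M}}(\cdot,s)\|_{\mathcal{B}}^p\Bigr) \leq C(t,p)\,\tilde{M}^p\int_0^{\min\{t,\tau\}}\!\!\!E\Bigl(\sup_{r\in(0,s)}\|v^M(\cdot,r)-v^{\tilde{M}}(\cdot,r)\|_{\mathcal{B}}^p\Bigr)ds,
\end{equation*}
where $\tau := \inf\{T\geq 0:\sup_{r\in(0,T)}|\nabla v^M(0^+,r)|\geq \tilde{M}\}$. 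Gronwall then forces $v^M(\cdot,s)=v^{\tilde{M}}(\cdot,s)$ for all $s\leq \tau$ a.s., giving the consistency required for concatenation.

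With consistency in hand, I would define $v$ pathwise by setting $v(\cdot,s):=v^M(\cdot,s)$ for $s<\tilde{\tau}_M$, where $\tilde{\tau}_M$ is given by \eqref{rtauMd}; consistency ensures this is unambiguous and produces an $\mathcal{F}_t$-adapted process on $[0,\sup_{M>0}\tilde{\tau}_M)$ which weakly satisfies \eqref{meqmain3}. The family $\{\tilde{\tau}_M\}$ is a localising sequence by construction, and maximality follows because, on the event $\{\sup_{M>0}\tilde{\tau}_M<\infty\}$,
\begin{equation*}
\lim_{t\to(\sup_{M>0}\tilde{\tau}_M)^-}\sup_{r\in(0,t)}|\nabla v(0^+,r)|=\infty\;\;\text{a.s.}
\end{equation*}
Uniqueness of the maximal solution is inherited from the uniqueness part of Theorem \ref{t1d} applied to each truncation. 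The main technical point to be careful with is the consistency estimate above: here the absolute value must be retained on $\nabla v(0^+,\cdot)$ since $v$ is signed, but the Lipschitz bound on $\min\{\cdot,\tilde{M}\}$ and the linear-in-$\|\cdot\|_{\mathcal{B}}$ nature of Proposition 4.4 of \cite{BH} still give a closed Gronwall inequality, so no new obstruction beyond what was already handled in Theorem \ref{t2} arises.
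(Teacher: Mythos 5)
Your proposal is correct and follows essentially the same route as the paper, which obtains Theorem \ref{t2d} precisely by rerunning the consistency-and-concatenation argument of Theorem \ref{t2} with the reflection measure set to zero (so that $v^M$ takes over the role of $u$) while retaining the absolute value on $\nabla v(0^+,\cdot)$. The observation that \eqref{stoparg} holds automatically because $|\nabla(\mathcal{T}_M(v^M))(0^+,\cdot)|\leq M$, the Gronwall consistency estimate, and the blow-up characterisation of maximality all match the paper's argument.
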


We consider now Case 3. Due to Theorem \ref{t2d}, and under its
assumptions there exist unique weak maximal solutions
$(v_1,\eta_1)$, $(v_2,\eta_2)$ satisfying
\eqref{psi}-\eqref{refl} in the maximal interval
$[0,\displaystyle{\sup_{M>0}}\tau_{3M})$ for $\tau_{3M}$ given by
\begin{equation}\label{rtauM3}
\begin{split}
\tau_{3M}:=&\inf\Big{\{}T\geq
0:\;\displaystyle{\sup_{r\in(0,T)}}(|\nabla v_1(0^+,r)|+|\nabla
v_2(0^+,r)|)\geq M\Big{\}}.
\end{split}
\end{equation}
We need $a\leq s^-(t)\leq s^+(t)\leq b$ in order to return to the
initial variables. By using the Stefan condition
\eqref{stefcond2} we obtain
$$\partial_ts^-(t)=\nabla v_2(0^+,t),\;\;\partial_t
s^+(t)=-\nabla v_1(0^+,t),$$ and we need
$$a\leq s^-(0)+\int_0^t\nabla v_2(0^+,s)ds\leq s^+(0)-\int_0^t\nabla
v_1(0^+,s)ds\leq b.$$ We define the stopping time
\begin{equation}\label{psp3}
\tau_{3s}:=\inf\Big{\{}T>0:\;\sup_{r\in(0,T)}|\nabla
v_1(0^+,r)+\nabla v_2(0^+,r))|\geq
(T)^{-1}(s^{+}(0)-s^{-}(0))\Big{\}},
\end{equation}
to keep the spread non-negative, and
\begin{equation}\label{nha3}
\tau_{3}^*:=\inf\Big{\{}T>0:\;\sup_{r\in(0,T)}|\nabla
v_2(0^+,r)|\geq
(T)^{-1}(s^{-}(0)-a)\Big{\}},
\end{equation}
\begin{equation}\label{nha33}
\tau_{3}^{**}:=\inf\Big{\{}T>0:\;\sup_{r\in(0,T)}|\nabla
v_1(0^+,r)|\geq (T)^{-1}(b-s^{+}(0))\Big{\}}\Big{\}},
\end{equation}
to keep the spread area in $\mathcal{D}$.

So, the next theorem holds.
\begin{theorem}\label{t33}
Under the assumptions of Theorem \ref{t2d}, and if the initial
spread satisfies $\lambda> s^+(0)-s^-(0)\geq 0$, then there exist
unique weak maximal solutions $w_1$, $w_2$ to the unreflected
Stefan problem \eqref{stefrrr}, and $w|_{x\geq s^+}=w_1$,
$w|_{x\leq s^-}=w_2$, in the maximal interval
$\mathcal{I}_3:=[0,\hat{\tau})$ for $\hat{\tau}:=
\min\{\displaystyle{\sup_{M>0}}\tau_{3M},\tau_{3s},\tau_3^*,\tau_3^{**}\}$,
with $\tau_{3M},\tau_{3s},\tau_3^*,\tau_3^{**}$ given by
\eqref{rtauM3}, \eqref{psp3}, \eqref{nha3}, \eqref{nha33}
 for which the spread $s^{+}(t)-s^-(t)$ defined by
the Stefan condition \eqref{stefcond2} exists and stays a.s.
non-negative for any $t\in\mathcal{I}_3$.

\end{theorem}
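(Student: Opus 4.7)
The plan is to mirror the proofs of Theorems \ref{t31} and \ref{t32} but with the reflection measures absent and with an additional stopping time that controls $s^+(t)$ from above, since in the unreflected case both $\nabla v_1(0^+,\cdot)$ and $\nabla v_2(0^+,\cdot)$ may change sign. Concretely, I would first invoke the abstract existence result on the fixed domain $\mathcal{D}$, then integrate the Stefan condition \eqref{stefcond2} to recover $s^{\pm}(t)$, and finally verify that the three stopping times \eqref{psp3}, \eqref{nha3}, \eqref{nha33} enforce precisely the geometric constraints that the inverse change of variables \eqref{cv1gen} needs.

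First, applying Theorem \ref{t2d} separately to each of the two uncoupled initial-boundary value problems in \eqref{stefr3y}, which are both of the reference form \eqref{meqmain3}, produces unique weak maximal solutions $v_1$ and $v_2$ on the joint maximal interval $[0,\sup_{M>0}\tau_{3M})$, with $\tau_{3M}$ as in \eqref{rtauM3}. Note that here $\tau_{3M}$ is built from $|\nabla v_1(0^+,\cdot)|+|\nabla v_2(0^+,\cdot)|$ rather than from $\nabla v_1(0^+,\cdot)+\nabla v_2(0^+,\cdot)$ as in \eqref{rtauM1}, precisely because the signed nature of $v_i$ makes the signs of $\nabla v_i(0^+,\cdot)$ arbitrary.

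Second, I would define $s^+(t)$ and $s^-(t)$ through the Stefan condition \eqref{stefcond2} by direct integration,
\begin{equation*}
s^+(t)=s^+(0)-\int_0^t\nabla v_1(0^+,r)\,dr,\qquad s^-(t)=s^-(0)+\int_0^t\nabla v_2(0^+,r)\,dr,
\end{equation*}
which are well defined and continuous in $t$ on $[0,\sup_{M>0}\tau_{3M})$ a.s. For $t<\hat{\tau}$, the definition \eqref{psp3} of $\tau_{3s}$ yields
\begin{equation*}
s^+(t)-s^-(t)\geq s^+(0)-s^-(0)-t\sup_{r\in(0,t)}|\nabla v_1(0^+,r)+\nabla v_2(0^+,r)|\geq 0,
\end{equation*}
so that the spread is a.s. non-negative on $\mathcal{I}_3$. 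Likewise, for $t<\tau_3^*$ the bound \eqref{nha3} gives $s^-(t)\geq s^-(0)-t\sup|\nabla v_2(0^+,\cdot)|\geq a$, and for $t<\tau_3^{**}$ the bound \eqref{nha33} gives $s^+(t)\leq s^+(0)+t\sup|\nabla v_1(0^+,\cdot)|\leq b$. Thus on $\mathcal{I}_3$ the configuration $a\leq s^-(t)\leq s^+(t)\leq b$ holds a.s., so that the inverse change of variables is legitimate: setting $w_1(x,t):=v_1(x-s^+(t),t)$ for $x\geq s^+(t)$, $w_2(x,t):=v_2(-x+s^-(t),t)$ for $x\leq s^-(t)$, and $w(x,t)=0$ on $[s^-(t),s^+(t)]$, the computations \eqref{eqs1}--\eqref{eqs2} together with \eqref{stefcond2} show that $w_1,w_2$ solve the system \eqref{stefrrr} and that $w|_{x\geq s^+}=w_1$, $w|_{x\leq s^-}=w_2$.

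Uniqueness follows automatically: uniqueness of $(v_1,v_2)$ on $[0,\sup_{M>0}\tau_{3M})$ comes from Theorem \ref{t2d}, and given $(v_1,v_2)$ the trajectories $s^{\pm}(t)$ are determined by the explicit integral formulae, so $w_1,w_2$ are determined pointwise by the change of variables. I do not anticipate a serious obstacle here, since the work is essentially bookkeeping: the genuinely difficult analytic input, namely the truncated existence estimate, the obstacle stability, and the Cauchy property of the Picard iterates in $L^p(\Omega,C[0,T];\mathcal{B})$, have already been absorbed into Theorem \ref{t2d}. The only subtle point is the verification that $\hat{\tau}>0$ a.s., which requires continuity in $t$ of $\sup_{r\in(0,t)}|\nabla v_i(0^+,r)|$ at $t=0$ together with the strict inequality $\lambda>s^+(0)-s^-(0)\geq 0$ in the hypothesis; this guarantees that each of $\tau_{3s},\tau_3^*,\tau_3^{**}$ is strictly positive a.s., so that $\mathcal{I}_3$ is a genuine maximal interval.
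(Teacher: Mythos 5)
Your proposal is correct and follows essentially the same route as the paper: the paper's argument for Theorem \ref{t33} is exactly the passage preceding its statement, namely applying Theorem \ref{t2d} to the two uncoupled problems in \eqref{stefr3y}, integrating the Stefan condition \eqref{stefcond2} to obtain $s^{\pm}(t)$, and introducing the stopping times \eqref{psp3}, \eqref{nha3}, \eqref{nha33} to enforce $a\leq s^-(t)\leq s^+(t)\leq b$ before inverting the change of variables. Your write-up merely makes explicit the pointwise verification of these inequalities, the uniqueness bookkeeping, and the positivity of $\hat{\tau}$, all consistent with the paper.
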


\subsection{Formal asymptotics} Let us consider the Stefan problem
\eqref{stef1} without reflection, and the multiplicative noise
 defined as $\sigma(y)\dot{W}(y,t)\equiv\dot{W}(t)$. We conjecture that
 the spread $s^+-s^-$ satisfies $\forall\;t>0$
 \begin{equation}\label{form1}
\begin{split}
&\partial_t(s^+-s^-)(t)=-\frac{2}{\lambda}w_{\infty},\\
&s^+(0)-s^-(0)=\mbox{given},
\end{split}
\end{equation}
 for
$w_\infty(t)$ the solution of the initial value stochastic
equation problem
\begin{equation}\label{form2}
\partial_t
w_\infty(t)=\frac{2\alpha}{\lambda^2}w_\infty(t)+\dot{W}(t),\;\;t>0,\;\;\;\;w_\infty(0)=\mbox{given}.
\end{equation}
%Observe that the solution $v_\infty$ of \eqref{sdev} is an
%exponential (geometric) Brownian.

We shall present some formal arguments for the derivation of the
above system. Let $w_\infty(t)$ be the mean field profile of the
diffusion when the domain is infinite. We define
$\Omega=\Omega_{\rm Liq}(t)\cup [s^-(t),s^+(t)]$ of diameter $\lambda>>0$. %We set
%$v_\infty(t)=:\beta(t)+W(t)$.
%and we define thus, $\beta(0)+W(0)=\beta(0):=0$ for $t=0$ the
%staring time of transactions, i.e., when the market opens where
%the total number of buy and sell orders is $0$ and so
%$v_\infty(0)=0$.
Moreover, we consider $\alpha>>0$ so that $\Delta w=0$ on
$\Omega_{\rm Liq}$ approximates the spde of the Stefan problem
\eqref{stef1}. Then $w$ such that
$$w(x,t)=\frac{w_\infty(t)}{\lambda}
(x-s^+(t))\;\;\;x\geq
s^+(t),\;\;\;w(x,t)=\frac{w_\infty(t)}{\lambda}
(-x+s^-(t))\;\;\;x\leq s^-(t)$$ satisfies exactly
\begin{equation}\label{sde16}
\begin{split}
&\Delta w(x,t)=0,\;\;t>0,\\
&w(s^\pm(t),t)=0,\\
&w(x,t)=w_\infty(t)\;\;\mbox{at}\;\;x=\lambda+s^+(t)\;\;\mbox{where}\;\; x-s^+(t)=\lambda,\\
&w(x,t)=w_\infty(t)\;\;\mbox{at}\;\;x=s^-(t)-\lambda\;\;\mbox{where}\;\; -x+s^-(t)=\lambda,\\
&\partial_t(s^{+}-s^{-})(t)=-(\nabla w)^+(s^+(t),t)+(\nabla
w)^-(s^-(t),t)=-\frac{2}{\lambda}w_\infty(t)\;\;t>0.
\end{split}
\end{equation}
Here, the value of $w=w_\infty$ at $x=s^\mp\mp\lambda$, for
$\lambda>>0$, approximates the condition $w\rightarrow w_\infty$
at infinite distance from the solid phase of
\cite{niet1,niet2,AKY}, i.e., convergence to the mean-field
solution.

From the above approximate problem \eqref{sde16} we keep the
spread evolution through $w_\infty$, i.e.,
$\partial_t(s^{+}-s^{-})(t)=-\frac{2}{\lambda}w_\infty(t)$, and
proceed by matching it to the asymptotics of the solution $w$ of
the stochastic parabolic equation \eqref{stef1} on $\Omega_{\rm
Liq}$. This will yield the sde for $w_\infty(t)$ that is missing.
%Setting $$v_\infty(t)=\beta(t)+W(t),$$ then we obtain
%$$\partial_t v_\infty(t)=\beta_t+\dot{W}(t),$$
%and
We observe that $\lambda=|\Omega|\simeq |\Omega_{\rm Liq}|$ and so
$$w_\infty(t)\simeq \frac{1}{|\Omega|}\int_{\Omega_{\rm Liq}}
w(x,t)dx.$$ By differentiating in time we have
\begin{equation*}
\begin{split}
|\Omega|\partial_t w_\infty\simeq &\int_{\Omega_{\rm Liq}}
w_t(x,t)dx-\int_{\partial\Omega_{\rm
Liq}}\partial_t(s^{+}(t)-s^{-}(t))wdx=\alpha\int_{\Omega_{\rm
Liq}}
\Delta w(x,t)dx\\
&+\int_{\Omega_{\rm Liq}}\sigma\dot{W}_s(x,t)-
\int_{\partial\Omega_{\rm Liq}}\partial_t(s^{+}(t)-s^{-}(t))wdx
\simeq -\alpha((\nabla w)^+(s^+(t),t)\\
&-(\nabla w)^-(s^-(t),t))+\int_{\Omega_{\rm
Liq}}\sigma\dot{W}_s(x,t)
-\int_{\partial\Omega_{\rm Liq}}\partial_t(s^{+}(t)-s^{-}(t))wdx\\
=&-\alpha\partial_t(s^{+}(t)-s^{-}(t))+\int_{\Omega_{\rm
Liq}}\sigma\dot{W}_s(x,t) -\int_{\partial\Omega_{\rm
Liq}}\partial_t(s^{+}(t)-s^{-}(t))wdx,
\end{split}
\end{equation*}
where we assumed that as $x\rightarrow\pm\infty$ $w\rightarrow
w_\infty(t)$ and so $\grad w\rightarrow 0$ (this can be modeled
by a Neumann condition for $w$ at $\partial\Omega$ when the
diameter $\lambda$ of $\Omega$ is very large). Since $\alpha>>0$
the last term can be ignored, and therefore
\begin{equation*}
\begin{split}
|\Omega|\partial_t w_\infty\simeq
-\alpha\partial_t(s^{+}(t)-s^{-}(t))+\int_{\Omega_{\rm
Liq}}\sigma\dot{W}_s(x,t).
\end{split}
\end{equation*}
Replacing $\lambda=|\Omega|$, and
$\partial_t(s^{+}(t)-s^{-}(t))=-\frac{2}{\lambda}w_{\infty}(t)$,
we get for $\sigma=1$ and $\dot{W}_s(x,t)=\dot{W}(t)$,
$$
\partial_t w_\infty\simeq \frac{2\alpha}{\lambda^2}
w_\infty+\dot{W}(t),\;\;\;
\partial_t(s^+-s^-)(t)=-\frac{2}{\lambda}w_\infty(t),$$
with initial values $w_\infty(0)$ and $s^+(0)-s^-(0)$ (the initial
spread).

%Below we show the spread evolution for $t\in[0,0.5]$ of the sde
%i.v. system \eqref{sdev}, \eqref{spreadev}, for
%$s^+(0)-s^-(0)=1-0.9=0.1$, $\lambda=1$ and $\alpha=100$; $5$
%realizations are presented.
%\begin{figure}[h]
%\includegraphics[width=7cm]{spreads.jpg}
%\caption{The spread evolution in time.}\label{fig1}
%\end{figure}
\begin{remark}
Setting $w_\infty:=\beta+W$ we derive the following equivalent
problem to \eqref{form1}, \eqref{form2}
\begin{equation*}
\begin{split}
&\partial_t(s^+-s^-)(t)=-\frac{2}{\lambda}(\beta(t)+W(t)),\\
&s^+(0)-s^-(0)=\mbox{given},
\end{split}
\end{equation*}
 for
$\beta(t)$ the solution of
\begin{equation*}
\partial_t
\beta(t)=\frac{2\alpha}{\lambda^2}(\beta(t)+W(t)),\;\;t>0,\;\;\;\;\beta(0)=\mbox{given}.
\end{equation*}

\end{remark}

\section*{Acknowledgment}
The research work was supported by the Hellenic Foundation for
Research and Innovation (H.F.R.I.) under the First Call for
H.F.R.I. Research Projects to support Faculty members and
Researchers. (Project Number: HFRI-FM17-45).

\bibliographystyle{plain}

\end{document}